\newtheorem{theorem}{Theorem}
\newtheorem{axiom}[theorem]{Axiom}
\newtheorem{conjecture}[theorem]{Conjecture}
\newtheorem{corollary}[theorem]{Corollary}
\newtheorem{definition}[theorem]{Definition}
\newtheorem{example}[theorem]{Example}
\newtheorem{exercise}[theorem]{Exercise}
\newtheorem{lemma}[theorem]{Lemma}
\newtheorem{proposition}[theorem]{Proposition}
\newtheorem{remark}[theorem]{Remark}
\newenvironment{proof}[1][Proof]{\noindent\textbf{#1.} }{\ \rule{0.5em}{0.5em}}
\chardef\@x10\chardef\@xv60
\def\tcitime{
\def\@time{%
  \@minute\time\@hour\@minute\divide\@hour\@xv
  \ifnum\@hour<\@x 0\fi\the\@hour:%
  \multiply\@hour\@xv\advance\@minute-\@hour
  \ifnum\@minute<\@x 0\fi\the\@minute
  }}%
\def\x@hyperref#1#2#3{%
   \catcode`\~ = 12
   \catcode`\$ = 12
   \catcode`\_ = 12
   \catcode`\# = 12
   \catcode`\& = 12
   \y@hyperref{#1}{#2}{#3}%
}
\def\y@hyperref#1#2#3#4{%
   #2\ref{#4}#3
   \catcode`\~ = 13
   \catcode`\$ = 3
   \catcode`\_ = 8
   \catcode`\# = 6
   \catcode`\& = 4
}
\def\QCTOpt[#1]#2{%
  \def\QCTOptB{#1}
  \def\QCTOptA{#2}
}
\def\QCTNOpt#1{%
  \def\QCTOptA{#1}
  \let\QCTOptB\empty
}
\def\Qct{%
  \@ifnextchar[{%
    \QCTOpt}{\QCTNOpt}
}
\def\QCBOpt[#1]#2{%
  \def\QCBOptB{#1}%
  \def\QCBOptA{#2}%
}
\def\QCBNOpt#1{%
  \def\QCBOptA{#1}%
  \let\QCBOptB\empty
}
\def\Qcb{%
  \@ifnextchar[{%
    \QCBOpt}{\QCBNOpt}%
}
\def\PrepCapArgs{%
  \ifx\QCBOptA\empty
    \ifx\QCTOptA\empty
      {}%
    \else
      \ifx\QCTOptB\empty
        {\QCTOptA}%
      \else
        [\QCTOptB]{\QCTOptA}%
      \fi
    \fi
  \else
    \ifx\QCBOptA\empty
      {}%
    \else
      \ifx\QCBOptB\empty
        {\QCBOptA}%
      \else
        [\QCBOptB]{\QCBOptA}%
      \fi
    \fi
  \fi
}
\def\GRAPHICSPS#1{%
 \ifcase\GRAPHICSTYPE
   \special{ps: #1}%
 \or
   \special{language "PS", include "#1"}%
 \fi
}%
\def\graffile#1#2#3#4{%
    \bgroup
       \@inlabelfalse
       \leavevmode
       \@ifundefined{bbl@deactivate}{\def~{\string~}}{\activesoff}%
        \raise -#4 \BOXTHEFRAME{%
           \hbox to #2{\raise #3\hbox to #2{\null #1\hfil}}}%
    \egroup
}%
\def\draftbox#1#2#3#4{%
 \leavevmode\raise -#4 \hbox{%
  \frame{\rlap{\protect\tiny #1}\hbox to #2%
   {\vrule height#3 width\z@ depth\z@\hfil}%
  }%
 }%
}%
\let\nographics=\@msidraft
\newif\ifwasdraft
\def\GRAPHIC#1#2#3#4#5{%
   \ifnum\@msidraft=\@ne\draftbox{#2}{#3}{#4}{#5}%
   \else\graffile{#1}{#3}{#4}{#5}%
   \fi
}
\def\addtoLaTeXparams#1{%
    \edef\LaTeXparams{\LaTeXparams #1}}%
\newif\ifBoxFrame \BoxFramefalse
\newif\ifOverFrame \OverFramefalse
\newif\ifUnderFrame \UnderFramefalse
\def\BOXTHEFRAME#1{%
   \hbox{%
      \ifBoxFrame
         \frame{#1}%
      \else
         {#1}%
      \fi
   }%
}
\def\doFRAMEparams#1{\BoxFramefalse\OverFramefalse\UnderFramefalse\readFRAMEparams#1\end}%
\def\readFRAMEparams#1{%
 \ifx#1\end%
  \let\next=\relax
  \else
  \ifx#1i\dispkind=\z@\fi
  \ifx#1d\dispkind=\@ne\fi
  \ifx#1f\dispkind=\tw@\fi
  \ifx#1t\addtoLaTeXparams{t}\fi
  \ifx#1b\addtoLaTeXparams{b}\fi
  \ifx#1p\addtoLaTeXparams{p}\fi
  \ifx#1h\addtoLaTeXparams{h}\fi
  \ifx#1X\BoxFrametrue\fi
  \ifx#1O\OverFrametrue\fi
  \ifx#1U\UnderFrametrue\fi
  \ifx#1w
    \ifnum\@msidraft=1\wasdrafttrue\else\wasdraftfalse\fi
    \@msidraft=\@ne
  \fi
  \let\next=\readFRAMEparams
  \fi
 \next
 }%
\def\IFRAME#1#2#3#4#5#6{%
      \bgroup
      \let\QCTOptA\empty
      \let\QCTOptB\empty
      \let\QCBOptA\empty
      \let\QCBOptB\empty
      #6%
      \parindent=0pt
      \leftskip=0pt
      \rightskip=0pt
      \setbox0=\hbox{\QCBOptA}%
      \@tempdima=#1\relax
      \ifOverFrame
          \typeout{This is not implemented yet}%
          \show\HELP
      \else
         \ifdim\wd0>\@tempdima
            \advance\@tempdima by \@tempdima
            \ifdim\wd0 >\@tempdima
               \setbox1 =\vbox{%
                  \unskip\hbox to \@tempdima{\hfill\GRAPHIC{#5}{#4}{#1}{#2}{#3}\hfill}%
                  \unskip\hbox to \@tempdima{\parbox[b]{\@tempdima}{\QCBOptA}}%
               }%
               \wd1=\@tempdima
            \else
               \textwidth=\wd0
               \setbox1 =\vbox{%
                 \noindent\hbox to \wd0{\hfill\GRAPHIC{#5}{#4}{#1}{#2}{#3}\hfill}\\%
                 \noindent\hbox{\QCBOptA}%
               }%
               \wd1=\wd0
            \fi
         \else
            \ifdim\wd0>0pt
              \hsize=\@tempdima
              \setbox1=\vbox{%
                \unskip\GRAPHIC{#5}{#4}{#1}{#2}{0pt}%
                \break
                \unskip\hbox to \@tempdima{\hfill \QCBOptA\hfill}%
              }%
              \wd1=\@tempdima
           \else
              \hsize=\@tempdima
              \setbox1=\vbox{%
                \unskip\GRAPHIC{#5}{#4}{#1}{#2}{0pt}%
              }%
              \wd1=\@tempdima
           \fi
         \fi
         \@tempdimb=\ht1
         \advance\@tempdimb by -#2
         \advance\@tempdimb by #3
         \leavevmode
         \raise -\@tempdimb \hbox{\box1}%
      \fi
      \egroup%
}%
\def\DFRAME#1#2#3#4#5{%
  \hfil\break
  \bgroup
     \leftskip\@flushglue
     \rightskip\@flushglue
     \parindent\z@
     \parfillskip\z@skip
     \let\QCTOptA\empty
     \let\QCTOptB\empty
     \let\QCBOptA\empty
     \let\QCBOptB\empty
     \vbox\bgroup
        \ifOverFrame
           #5\QCTOptA\par
        \fi
        \GRAPHIC{#4}{#3}{#1}{#2}{\z@}%
        \ifUnderFrame
           \break#5\QCBOptA
        \fi
     \egroup
   \egroup
   \break
}%
\def\FFRAME#1#2#3#4#5#6#7{%
  \@ifundefined{floatstyle}
    {
     \begin{figure}[#1]%
    }
    {
     \ifx#1h
      \begin{figure}[H]%
     \else
      \begin{figure}[#1]%
     \fi
    }
  \let\QCTOptA\empty
  \let\QCTOptB\empty
  \let\QCBOptA\empty
  \let\QCBOptB\empty
  \ifOverFrame
    #4
    \ifx\QCTOptA\empty
    \else
      \ifx\QCTOptB\empty
        \caption{\QCTOptA}%
      \else
        \caption[\QCTOptB]{\QCTOptA}%
      \fi
    \fi
    \ifUnderFrame\else
      \label{#5}%
    \fi
  \else
    \UnderFrametrue%
  \fi
  \begin{center}\GRAPHIC{#7}{#6}{#2}{#3}{\z@}\end{center}%
  \ifUnderFrame
    #4
    \ifx\QCBOptA\empty
      \caption{}%
    \else
      \ifx\QCBOptB\empty
        \caption{\QCBOptA}%
      \else
        \caption[\QCBOptB]{\QCBOptA}%
      \fi
    \fi
    \label{#5}%
  \fi
  \end{figure}%
 }%
\def\makeactives{
  \catcode`\"=\active
  \catcode`\;=\active
  \catcode`\:=\active
  \catcode`\'=\active
  \catcode`\~=\active
}
   \gdef\activesoff{%
      \def"{\string"}%
      \def;{\string;}%
      \def:{\string:}%
      \def'{\string'}%
      \def~{\string~}%
    }
\def\FRAME#1#2#3#4#5#6#7#8{%
 \bgroup
 \ifnum\@msidraft=\@ne
   \wasdrafttrue
 \else
   \wasdraftfalse%
 \fi
 \def\LaTeXparams{}%
 \dispkind=\z@
 \def\LaTeXparams{}%
 \doFRAMEparams{#1}%
 \ifnum\dispkind=\z@\IFRAME{#2}{#3}{#4}{#7}{#8}{#5}\else
  \ifnum\dispkind=\@ne\DFRAME{#2}{#3}{#7}{#8}{#5}\else
   \ifnum\dispkind=\tw@
    \edef\@tempa{\noexpand\FFRAME{\LaTeXparams}}%
    \@tempa{#2}{#3}{#5}{#6}{#7}{#8}%
    \fi
   \fi
  \fi
  \ifwasdraft\@msidraft=1\else\@msidraft=0\fi{}%
  \egroup
 }%
\def\TEXUX#1{"texux"}
\long\def\QQQ#1#2{%
     \long\expandafter\def\csname#1\endcsname{#2}}%
\long\def\QQA#1#2{}%
\def\QTR#1#2{{\csname#1\endcsname {#2}}}%
\def\EXPAND#1[#2]#3{}%
\def\NOEXPAND#1[#2]#3{}%
\def\LaTeXparent#1{}%
\def\ChildStyles#1{}%
\def\ChildDefaults#1{}%
\def\QTagDef#1#2#3{}%
  \providecommand{\UNICODE}[2][]{\protect\rule{.1in}{.1in}}
  \providecommand{\U}[1]{\protect\rule{.1in}{.1in}}
\def\QQfnmark#1{\footnotemark}
 \def\abstract{%
  \if@twocolumn
   \section*{Abstract (Not appropriate in this style!)}%
   \else \small
   \begin{center}{\bf Abstract\vspace{-.5em}\vspace{\z@}}\end{center}%
   \quotation
   \fi
  }%
   \def\registered{\relax\ifmmode{}\r@gistered
                    \else$\m@th\r@gistered$\fi}%
 \def\r@gistered{^{\ooalign
  {\hfil\raise.07ex\hbox{$\scriptstyle\rm\text{R}$}\hfil\crcr
  \mathhexbox20D}}}}{}%
\newdimen\theight
\def\newfmtname{LaTeX2e}
  \DeclareOldFontCommand{\rm}{\normalfont\rmfamily}{\mathrm}
  \DeclareOldFontCommand{\sf}{\normalfont\sffamily}{\mathsf}
  \DeclareOldFontCommand{\tt}{\normalfont\ttfamily}{\mathtt}
  \DeclareOldFontCommand{\bf}{\normalfont\bfseries}{\mathbf}
  \DeclareOldFontCommand{\it}{\normalfont\itshape}{\mathit}
  \DeclareOldFontCommand{\sl}{\normalfont\slshape}{\@nomath\sl}
  \DeclareOldFontCommand{\sc}{\normalfont\scshape}{\@nomath\sc}
\def\alpha{{\Greekmath 010B}}%
\def\beta{{\Greekmath 010C}}%
\def\gamma{{\Greekmath 010D}}%
\def\delta{{\Greekmath 010E}}%
\def\epsilon{{\Greekmath 010F}}%
\def\zeta{{\Greekmath 0110}}%
\def\eta{{\Greekmath 0111}}%
\def\theta{{\Greekmath 0112}}%
\def\iota{{\Greekmath 0113}}%
\def\kappa{{\Greekmath 0114}}%
\def\lambda{{\Greekmath 0115}}%
\def\mu{{\Greekmath 0116}}%
\def\nu{{\Greekmath 0117}}%
\def\xi{{\Greekmath 0118}}%
\def\pi{{\Greekmath 0119}}%
\def\rho{{\Greekmath 011A}}%
\def\sigma{{\Greekmath 011B}}%
\def\tau{{\Greekmath 011C}}%
\def\upsilon{{\Greekmath 011D}}%
\def\phi{{\Greekmath 011E}}%
\def\chi{{\Greekmath 011F}}%
\def\psi{{\Greekmath 0120}}%
\def\omega{{\Greekmath 0121}}%
\def\varepsilon{{\Greekmath 0122}}%
\def\vartheta{{\Greekmath 0123}}%
\def\varpi{{\Greekmath 0124}}%
\def\varrho{{\Greekmath 0125}}%
\def\varsigma{{\Greekmath 0126}}%
\def\varphi{{\Greekmath 0127}}%
\def\nabla{{\Greekmath 0272}}
\def\FindBoldGroup{%
   {\setbox0=\hbox{$\mathbf{x\global\edef\theboldgroup{\the\mathgroup}}$}}%
}
\def\Greekmath#1#2#3#4{%
    \if@compatibility
        \ifnum\mathgroup=\symbold
           \mathchoice{\mbox{\boldmath$\displaystyle\mathchar"#1#2#3#4$}}%
                      {\mbox{\boldmath$\textstyle\mathchar"#1#2#3#4$}}%
                      {\mbox{\boldmath$\scriptstyle\mathchar"#1#2#3#4$}}%
                      {\mbox{\boldmath$\scriptscriptstyle\mathchar"#1#2#3#4$}}%
        \else
           \mathchar"#1#2#3#4%
        \fi
    \else
        \FindBoldGroup
        \ifnum\mathgroup=\theboldgroup 
           \mathchoice{\mbox{\boldmath$\displaystyle\mathchar"#1#2#3#4$}}%
                      {\mbox{\boldmath$\textstyle\mathchar"#1#2#3#4$}}%
                      {\mbox{\boldmath$\scriptstyle\mathchar"#1#2#3#4$}}%
                      {\mbox{\boldmath$\scriptscriptstyle\mathchar"#1#2#3#4$}}%
        \else
           \mathchar"#1#2#3#4%
        \fi
      \fi}
\newif\ifGreekBold  \GreekBoldfalse
\let\SAVEPBF=\pbf
\def\pbf{\GreekBoldtrue\SAVEPBF}%
  \newcounter{equationnumber}
  \def\mathletters{%
     \addtocounter{equation}{1}
     \edef\@currentlabel{\theequation}%
     \setcounter{equationnumber}{\c@equation}
     \setcounter{equation}{0}%
     \edef\theequation{\@currentlabel\noexpand\alph{equation}}%
  }
    \def\BibTeX{{\rm B\kern-.05em{\sc i\kern-.025em b}\kern-.08em
                 T\kern-.1667em\lower.7ex\hbox{E}\kern-.125emX}}}{}%
\def\AmS{{\protect\usefont{OMS}{cmsy}{m}{n}%
                A\kern-.1667em\lower.5ex\hbox{M}\kern-.125emS}}}{}%
\def\@@eqncr{\let\@tempa\relax
    \ifcase\@eqcnt \def\@tempa{& & &}\or \def\@tempa{& &}%
      \else \def\@tempa{&}\fi
     \@tempa
     \if@eqnsw
        \iftag@
           \@taggnum
        \else
           \@eqnnum\stepcounter{equation}%
        \fi
     \fi
     \global\tag@false
     \global\@eqnswtrue
     \global\@eqcnt\z@\cr}
\def\TCItag{\@ifnextchar*{\@TCItagstar}{\@TCItag}}
\def\@TCItag#1{%
    \global\tag@true
    \global\def\@taggnum{(#1)}}
\def\@TCItagstar*#1{%
    \global\tag@true
    \global\def\@taggnum{#1}}
\def\ExitTCILatex{\makeatother }
\let\DOTSI\relax
\def\RIfM@{\relax\ifmmode}%
\def\FN@{\futurelet\next}%
\def\iint{\DOTSI\intno@\tw@\FN@\ints@}%
\def\iiint{\DOTSI\intno@\thr@@\FN@\ints@}%
\def\iiiint{\DOTSI\intno@4 \FN@\ints@}%
\def\idotsint{\DOTSI\intno@\z@\FN@\ints@}%
\def\ints@{\findlimits@\ints@@}%
\newif\iflimtoken@
\newif\iflimits@
\def\findlimits@{\limtoken@true\ifx\next\limits\limits@true
 \else\ifx\next\nolimits\limits@false\else
 \limtoken@false\ifx\ilimits@\nolimits\limits@false\else
 \ifinner\limits@false\else\limits@true\fi\fi\fi\fi}%
\def\multint@{\int\ifnum\intno@=\z@\intdots@                          
 \else\intkern@\fi                                                    
 \ifnum\intno@>\tw@\int\intkern@\fi                                   
 \ifnum\intno@>\thr@@\int\intkern@\fi                                 
 \int}
\def\multintlimits@{\intop\ifnum\intno@=\z@\intdots@\else\intkern@\fi
 \ifnum\intno@>\tw@\intop\intkern@\fi
 \ifnum\intno@>\thr@@\intop\intkern@\fi\intop}%
\def\intic@{%
    \mathchoice{\hskip.5em}{\hskip.4em}{\hskip.4em}{\hskip.4em}}%
\def\negintic@{\mathchoice
 {\hskip-.5em}{\hskip-.4em}{\hskip-.4em}{\hskip-.4em}}%
\def\ints@@{\iflimtoken@                                              
 \def\ints@@@{\iflimits@\negintic@
   \mathop{\intic@\multintlimits@}\limits                             
  \else\multint@\nolimits\fi                                          
  \eat@}
 \else                                                                
 \def\ints@@@{\iflimits@\negintic@
  \mathop{\intic@\multintlimits@}\limits\else
  \multint@\nolimits\fi}\fi\ints@@@}%
\def\intkern@{\mathchoice{\!\!\!}{\!\!}{\!\!}{\!\!}}%
\def\plaincdots@{\mathinner{\cdotp\cdotp\cdotp}}%
\def\intdots@{\mathchoice{\plaincdots@}%
 {{\cdotp}\mkern1.5mu{\cdotp}\mkern1.5mu{\cdotp}}%
 {{\cdotp}\mkern1mu{\cdotp}\mkern1mu{\cdotp}}%
 {{\cdotp}\mkern1mu{\cdotp}\mkern1mu{\cdotp}}}%
\def\RIfM@{\relax\protect\ifmmode}
\def\text{\RIfM@\expandafter\text@\else\expandafter\mbox\fi}
\let\nfss@text\text
\def\text@#1{\mathchoice
   {\textdef@\displaystyle\f@size{#1}}%
   {\textdef@\textstyle\tf@size{\firstchoice@false #1}}%
   {\textdef@\textstyle\sf@size{\firstchoice@false #1}}%
   {\textdef@\textstyle \ssf@size{\firstchoice@false #1}}%
   \glb@settings}
\def\textdef@#1#2#3{\hbox{{%
                    \everymath{#1}%
                    \let\f@size#2\selectfont
                    #3}}}
\newif\iffirstchoice@
\def\Let@{\relax\iffalse{\fi\let\\=\cr\iffalse}\fi}%
\def\vspace@{\def\vspace##1{\crcr\noalign{\vskip##1\relax}}}%
\def\multilimits@{\bgroup\vspace@\Let@
 \baselineskip\fontdimen10 \scriptfont\tw@
 \advance\baselineskip\fontdimen12 \scriptfont\tw@
 \lineskip\thr@@\fontdimen8 \scriptfont\thr@@
 \lineskiplimit\lineskip
 \vbox\bgroup\ialign\bgroup\hfil$\m@th\scriptstyle{##}$\hfil\crcr}%
\def\Sb{_\multilimits@}%
\def\endSb{\crcr\egroup\egroup\egroup}%
\def\Sp{^\multilimits@}%
\newdimen\ex@
\def\rightarrowfill@#1{$#1\m@th\mathord-\mkern-6mu\cleaders
 \hbox{$#1\mkern-2mu\mathord-\mkern-2mu$}\hfill
 \mkern-6mu\mathord\rightarrow$}%
\def\leftarrowfill@#1{$#1\m@th\mathord\leftarrow\mkern-6mu\cleaders
 \hbox{$#1\mkern-2mu\mathord-\mkern-2mu$}\hfill\mkern-6mu\mathord-$}%
\def\leftrightarrowfill@#1{$#1\m@th\mathord\leftarrow
\mkern-6mu\cleaders
 \hbox{$#1\mkern-2mu\mathord-\mkern-2mu$}\hfill
 \mkern-6mu\mathord\rightarrow$}%
\def\overrightarrow{\mathpalette\overrightarrow@}%
\def\overrightarrow@#1#2{\vbox{\ialign{##\crcr\rightarrowfill@#1\crcr
 \noalign{\kern-\ex@\nointerlineskip}$\m@th\hfil#1#2\hfil$\crcr}}}%
\def\overleftarrow{\mathpalette\overleftarrow@}%
\def\overleftarrow@#1#2{\vbox{\ialign{##\crcr\leftarrowfill@#1\crcr
 \noalign{\kern-\ex@\nointerlineskip}$\m@th\hfil#1#2\hfil$\crcr}}}%
\def\overleftrightarrow{\mathpalette\overleftrightarrow@}%
\def\overleftrightarrow@#1#2{\vbox{\ialign{##\crcr
   \leftrightarrowfill@#1\crcr
 \noalign{\kern-\ex@\nointerlineskip}$\m@th\hfil#1#2\hfil$\crcr}}}%
\def\underrightarrow{\mathpalette\underrightarrow@}%
\def\underrightarrow@#1#2{\vtop{\ialign{##\crcr$\m@th\hfil#1#2\hfil
  $\crcr\noalign{\nointerlineskip}\rightarrowfill@#1\crcr}}}%
\def\underleftarrow{\mathpalette\underleftarrow@}%
\def\underleftarrow@#1#2{\vtop{\ialign{##\crcr$\m@th\hfil#1#2\hfil
  $\crcr\noalign{\nointerlineskip}\leftarrowfill@#1\crcr}}}%
\def\underleftrightarrow{\mathpalette\underleftrightarrow@}%
\def\underleftrightarrow@#1#2{\vtop{\ialign{##\crcr$\m@th
  \hfil#1#2\hfil$\crcr
 \noalign{\nointerlineskip}\leftrightarrowfill@#1\crcr}}}%
\def\qopnamewl@#1{\mathop{\operator@font#1}\nlimits@}
\let\nlimits@\displaylimits
\def\setboxz@h{\setbox\z@\hbox}
\def\varlim@#1#2{\mathop{\vtop{\ialign{##\crcr
 \hfil$#1\m@th\operator@font lim$\hfil\crcr
 \noalign{\nointerlineskip}#2#1\crcr
 \noalign{\nointerlineskip\kern-\ex@}\crcr}}}}
 \def\rightarrowfill@#1{\m@th\setboxz@h{$#1-$}\ht\z@\z@
  $#1\copy\z@\mkern-6mu\cleaders
  \hbox{$#1\mkern-2mu\box\z@\mkern-2mu$}\hfill
  \mkern-6mu\mathord\rightarrow$}
\def\leftarrowfill@#1{\m@th\setboxz@h{$#1-$}\ht\z@\z@
  $#1\mathord\leftarrow\mkern-6mu\cleaders
  \hbox{$#1\mkern-2mu\copy\z@\mkern-2mu$}\hfill
  \mkern-6mu\box\z@$}
\def\projlim{\qopnamewl@{proj\,lim}}
\def\injlim{\qopnamewl@{inj\,lim}}
\def\varinjlim{\mathpalette\varlim@\rightarrowfill@}
\def\varprojlim{\mathpalette\varlim@\leftarrowfill@}
\def\varliminf{\mathpalette\varliminf@{}}
\def\varliminf@#1{\mathop{\underline{\vrule\@depth.2\ex@\@width\z@
   \hbox{$#1\m@th\operator@font lim$}}}}
\def\varlimsup{\mathpalette\varlimsup@{}}
\def\varlimsup@#1{\mathop{\overline
  {\hbox{$#1\m@th\operator@font lim$}}}}
\def\align{\@verbatim \frenchspacing\@vobeyspaces \@alignverbatim
You are using the "align" environment in a style in which it is not defined.}
\let\csname endalign*\endcsname =\endtrivlist
\def\alignat{\@verbatim \frenchspacing\@vobeyspaces \@alignatverbatim
You are using the "alignat" environment in a style in which it is not defined.}
\let\csname endalignat*\endcsname =\endtrivlist
\def\xalignat{\@verbatim \frenchspacing\@vobeyspaces \@xalignatverbatim
You are using the "xalignat" environment in a style in which it is not defined.}
\let\csname endxalignat*\endcsname =\endtrivlist
\def\gather{\@verbatim \frenchspacing\@vobeyspaces \@gatherverbatim
You are using the "gather" environment in a style in which it is not defined.}
\let\csname endgather*\endcsname =\endtrivlist
\def\multiline{\@verbatim \frenchspacing\@vobeyspaces \@multilineverbatim
You are using the "multiline" environment in a style in which it is not defined.}
\let\csname endmultiline*\endcsname =\endtrivlist
\def\arrax{\@verbatim \frenchspacing\@vobeyspaces \@arraxverbatim
You are using a type of "array" construct that is only allowed in AmS-LaTeX.}
\def\tabulax{\@verbatim \frenchspacing\@vobeyspaces \@tabulaxverbatim
You are using a type of "tabular" construct that is only allowed in AmS-LaTeX.}
\let\csname endarrax*\endcsname =\endtrivlist
\let\csname endtabulax*\endcsname =\endtrivlist
 \def\endequation{%
     \ifmmode\ifinner 
      \iftag@
        \addtocounter{equation}{-1} 
        $\hfil
           \displaywidth\linewidth\@taggnum\egroup \endtrivlist
        \global\tag@false
        \global\@ignoretrue
      \else
        $\hfil
           \displaywidth\linewidth\@eqnnum\egroup \endtrivlist
        \global\tag@false
        \global\@ignoretrue
      \fi
     \else
      \iftag@
        \addtocounter{equation}{-1} 
        \eqno \hbox{\@taggnum}
        \global\tag@false%
        $$\global\@ignoretrue
      \else
        \eqno \hbox{\@eqnnum}
        $$\global\@ignoretrue
      \fi
     \fi\fi
 }
 \newif\iftag@ \tag@false
 \def\TCItag{\@ifnextchar*{\@TCItagstar}{\@TCItag}}
 \def\@TCItag#1{%
     \global\tag@true
     \global\def\@taggnum{(#1)}}
 \def\@TCItagstar*#1{%
     \global\tag@true
     \global\def\@taggnum{#1}}
     \def\tag{\@ifnextchar*{\@tagstar}{\@tag}}
     \def\@tag#1{%
         \global\tag@true
         \global\def\@taggnum{(#1)}}
     \def\@tagstar*#1{%
         \global\tag@true
         \global\def\@taggnum{#1}}
\begin{document}

\title{Differential\ Equations Driven by Gaussian Signals II}
\author{Peter Friz\thanks{%
Corresponding author. Department of Pure Mathematics and Mathematical
Statistics, University of Cambridge. Email: P.K.Friz@statslab.cam.ac.uk. } \
\ \thanks{%
Partially supported by a Leverhulme Research Fellowship. } \and Nicolas
Victoir}
\maketitle

\begin{abstract}
Large classes of multi-dimensional Gaussian processes can be enhanced with
stochastic L\'{e}vy area(s). In a previous paper, we gave sufficient and
essentially necessary conditions, only involving variational properties of
the covariance. Following T. Lyons, the resulting lift to a "Gaussian rough
path" gives a robust theory of (stochastic) differential equations driven by
Gaussian signals with sample path regularity worse than Brownian motion.

The purpose of this sequel paper is to establish convergence of
Karhunen-Loeve approximations in rough path metrics. Particular care is
necessary since martingale arguments are not enough to deal with third
iterated integrals. An abstract support criterion for approximately
continuous Wiener functionals then gives a description of the support of
Gaussian rough paths as the closure of the (canonically lifted)
Cameron-Martin space.
\end{abstract}

\section{Introduction}

Let $X$ be a real-valued centered Gaussian process on $\left[ 0,1\right] $
with continuous sample paths and (continuous) covariance $R=R\left(
s,t\right) =\mathbb{E}\left( X_{s}X_{t}\right) $. We say that $R$ has finite 
$\rho $-variation of $R$, in symbols $R\in C^{\rho \text{$-var$}}\left( %
\left[ 0,1\right] ^{2},\mathbb{R}\right) $ if%
\begin{equation*}
\sup_{D}\sum_{i,j}\left\vert \mathbb{E}\left[ \left(
X_{t_{i+1}}-X_{t_{i}}\right) \left( X_{t_{j+1}}-X_{t_{j}}\right) \right]
\right\vert ^{\rho }<\infty .
\end{equation*}%
An first consequence of this \cite{friz-victoir-07-DEdrivenGaussian_I} is
that (2D) $\rho $-variation of the covariance implies (the usual 1D) $\rho $%
-variation regularity of elements in the associated \textit{Cameron-Martin} 
\textit{space} $\mathcal{H}$, viewed as subspace of $C\left( \left[ 0,1%
\right] ,\mathbb{R}\right) $. Good examples to have in mind are\ standard
Brownian motion with $\rho =1$ and fractional Brownian motion with Hurst
parameter $H\in (0,1/2]$ for which $\rho =1/\left( 2H\right) $. We then
consider a $d$-dimensional, continuous, centered Gaussian process with
independent components,%
\begin{equation*}
X=\left( X^{1},\ldots ,X^{d}\right) ,
\end{equation*}%
with respective covariances $R_{1},\ldots ,R_{d}\in C^{\rho \text{$-var$}}$.
Assuming momentarily smooth sample paths we can define $\mathbf{X}_{\cdot
}\equiv S_{3}\left( X\right) $ by its coordinates in the three
"tensor-levels", $\mathbb{R}^{d},\mathbb{R}^{d}\otimes \mathbb{R}^{d}$and $%
\mathbb{R}^{d}\otimes \mathbb{R}^{d}\otimes \mathbb{R}^{d}$, obtained by
iterated integration%
\begin{equation*}
\mathbf{X}_{\cdot }^{i}=\int_{0}^{\cdot }dX_{r}^{i},\,\,\,\mathbf{X}_{\cdot
}^{i,j}=\int_{0}^{\cdot }\int_{0}^{s}dX_{r}^{i}dX_{s}^{j},\,\,\,\mathbf{X}%
_{\cdot }^{i,j,k}=\int_{0}^{\cdot
}\int_{0}^{t}\int_{0}^{s}dX_{r}^{i}dX_{s}^{j}dX_{t}^{k},\,\,\,
\end{equation*}%
with indices $i,j,k\in \left\{ 1,...,d\right\} $. It turns out that for $%
\rho \in \lbrack 1,2)$ all these integrals make sense as $L^{2}$ limits
(similar to It\^{o}'s theory) and we cite the following

\begin{theorem}[\protect\cite{friz-victoir-07-DEdrivenGaussian_I}]
\label{Thm1Cited}Let $X=\left( X^{1},\ldots ,X^{d}\right) ,Y=\left(
Y^{1},\ldots ,Y^{d}\right) $ be two continuous, centered jointly Gaussian
processes defined on $\left[ 0,1\right] $ such that $\left(
X^{i},Y^{i}\right) $ is independent of $\left( X^{j},Y^{j}\right) $ when $%
i\neq j$. Let $\rho \in \lbrack 1,2)$ and assume the covariance of $\left(
X,Y\right) $ is of finite $\rho $-variation,%
\begin{equation*}
\left\vert R_{\left( X,Y\right) }\right\vert _{\rho -var;\left[ 0,1\right]
^{2}}\leq K<\infty .
\end{equation*}%
Let $p>2\rho $ and $\mathbf{X},\mathbf{Y}$ denote the natural lift\footnote{%
The present result in combination with a Cauchy argument effectively defines
what is meant by natural lift.} of $X,Y$ respectively. Then there exist
positive constants $\theta =\theta \left( p,\rho \right) $ and $C=C\left(
p,\rho ,K\right) $ such that%
\begin{equation*}
\left\vert d_{p\text{$-var$}}\left( \mathbf{X},\mathbf{Y}\right) \right\vert
_{L^{2}}\leq C\left\vert R_{X-Y}\right\vert _{\infty ;\left[ 0,1\right]
^{2}}^{\theta }.
\end{equation*}
\end{theorem}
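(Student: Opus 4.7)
The approach is to combine pointwise $L^{2}$ estimates for the increments of $\mathbf{X}-\mathbf{Y}$ at each of the three tensor levels, expressed in terms of $|R_{X-Y}|_{\infty}$ and a 2D control function coming from the $\rho$-variation of $R_{(X,Y)}$, with a rough-path Kolmogorov-type criterion, then interpolate between a uniform $p'$-variation bound (for some $p' \in (2\rho, p)$) and the sup-norm smallness to conclude.

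First I would establish, for each $s<t$ and each $k \in \{1,2,3\}$, a pointwise moment bound of the form
\begin{equation*}
\bigl\Vert \mathbf{X}_{s,t}^{k} - \mathbf{Y}_{s,t}^{k} \bigr\Vert_{L^{2}} \leq C\, |R_{X-Y}|_{\infty;[0,1]^{2}}^{\theta_{k}}\, \omega(s,t)^{k/(2\rho')},
\end{equation*}
where $\omega(s,t)$ dominates $|R_{(X,Y)}|_{\rho\text{-var};[s,t]^{2}}^{\rho}$, $\rho' > \rho$ is chosen with $p>2\rho'$, and $\theta_{k}>0$. Level one is immediate from the identity $\Vert X_{s,t}-Y_{s,t}\Vert_{L^{2}}^{2} = R_{X-Y}(t,t)-2R_{X-Y}(s,t)+R_{X-Y}(s,s)$, interpolated with the 1D $\rho$-variation regularity of $R$ along the diagonal (already available from the companion paper). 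Levels two and three are expanded into sums of Gaussian iterated integrals and estimated by 2D Young integration against the kernels of $R_{(X,Y)}$ and $R_{X-Y}$; the condition $\rho<2$ ensures that these Young integrals are well-defined, and peeling off one factor of $|R_{X-Y}|_{\infty}$ from each of the two or three covariance increments yields the exponent $\theta_{k}$.

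Next, because $\mathbf{X}_{s,t}^{k}-\mathbf{Y}_{s,t}^{k}$ lies in the inhomogeneous Wiener chaos of order at most $k \leq 3$, hypercontractivity upgrades the $L^{2}$ bounds to $L^{q}$ bounds for every $q<\infty$, with constants depending only on $q$ and $k$. A rough-path Kolmogorov criterion of Garsia--Rodemich--Rumsey / Besov--variation embedding type (in the form used in the companion paper) then converts these pointwise moment estimates into a $p'$-variation moment estimate
\begin{equation*}
\bigl\Vert d_{p'\text{-var}}(\mathbf{X},\mathbf{Y})\bigr\Vert_{L^{q}} \leq C_{q}
\end{equation*}
for every $p' \in (2\rho, p)$ and every $q < \infty$. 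Combining this uniform $p'$-variation bound with the pointwise sup-norm bound via the standard interpolation $d_{p\text{-var}} \leq (d_{\infty})^{1-p'/p}(d_{p'\text{-var}})^{p'/p}$ applied level by level on the step-$3$ free nilpotent group, and taking $L^{2}$ norms by Cauchy--Schwarz, produces the stated estimate with $\theta = (1-p'/p)\cdot \min_{k}\theta_{k}>0$.

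The main obstacle is the level-three estimate. For Brownian motion one could bound the third iterated integral via an It\^{o}/martingale argument, but for a generic Gaussian process with $\rho>1$ no such adapted filtration is available. Instead the triple iterated integral must be written as a deterministic integral against the joint covariance and controlled by two successive applications of 2D Young integration in the plane; the hypothesis $\rho<2$ is used twice in a tight way, and this step is what ultimately dictates the admissible range of $p$ and the value of $\theta(p,\rho)$.
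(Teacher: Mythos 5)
This theorem is imported from the companion paper \cite{friz-victoir-07-DEdrivenGaussian_I} and is stated here without proof, so there is no in-paper argument to compare against; judged against the known proof in Part I, your plan is essentially the same strategy: levelwise $L^{2}$ estimates for $\mathbf{X}_{s,t}-\mathbf{Y}_{s,t}$ obtained from 2D Young integration against the covariance, hypercontractivity on Wiener chaos of order $\le 3$, a Garsia--Rodemich--Rumsey/Kolmogorov-type criterion for homogeneous variation distances, and interpolation between a uniform $p'$-variation bound and sup-norm smallness. One point to tighten: you cannot literally ``peel off a factor of $\left\vert R_{X-Y}\right\vert _{\infty }$'' from inside a 2D Young integral while keeping the variation control; the correct mechanism is the 2D interpolation $\left\vert R_{X-Y}\right\vert _{\rho '\text{-var}}\leq \left\vert R_{X-Y}\right\vert _{\infty }^{1-\rho /\rho '}\left\vert R_{X-Y}\right\vert _{\rho \text{-var}}^{\rho /\rho '}$ for $\rho '>\rho $ with $1/\rho '+1/\rho >1$, after which the Young estimates apply with the $\rho '$-variation norm and produce the exponent $\theta $. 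With that correction, and noting that the estimates must first be derived for piecewise linear (or smooth) approximations uniformly --- since the theorem itself is what defines the natural lift via a Cauchy argument --- your outline is faithful to the actual proof.
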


Let us consider a continuous, centered $d$-dimensional process $W$ with
independent components and finite $\rho \in \lbrack 1,2)$-covariance and its
piecewise linear approximations $Z^{n}$ such that $Z_{t}\equiv Z_{t}^{n}$
for $t\in \left\{ l/n:l=0,...,n\right\} $. The above theorem, applied to $%
\mathbf{X}=S_{3}\left( Z^{n}\right) ,\mathbf{Y}=S_{3}\left( Z^{m}\right) $,
shows that $S_{3}\left( Z^{n}\right) $ is Cauchy and this can be taken as
definition of $\mathbf{W}$, the natural lift of $W$.

\bigskip

Of course, there should be nothing special about piecewise linear
approximations\footnote{%
Observe how the assumptions of theorem \ref{Thm1Cited} rule out McShane's
famous example \cite[Section on Approximations of the Wiener process]%
{ikeda-watanabe-89}.} and we are indeed able to show that Karhunen-Loeve
type approximations of form $S_{3}\left( \mathbb{E}\left( W|\mathcal{F}%
_{n}\right) \right) $ converge to the same natural lift $\mathbf{W}$.
Existence of L\'{e}vy's area for a multi-dimensional Gaussian process and
regularity of Cameron-Martin paths are in fact closely related. Given a
basis of $\mathcal{H}_{i}$, the Cameron-Martin space for $X^{i},$ $i\in
\left\{ 1,\ldots ,d\right\} $, we have the $L^{2}$- or Karhunen-Loeve
expansion%
\begin{equation*}
X^{i}\left( \omega \right) =\sum_{k\in \mathbb{N}}Z_{k}^{i}\left( \omega
\right) h^{i,k}.
\end{equation*}%
If one assumes that $X=\left( X^{1},\ldots ,X^{d}\right) $ lifts to $\mathbf{%
X}$

\begin{itemize}
\item[(i)] having basic symmetry and integrability properties;

\item[(ii)] via a Borel measurable map which coincides with the classical
notion of iterated integrals on smooth paths;

\item[(iii)] such that the lift of $X+h=\left( X^{1}+h^{1},\ldots
,X^{d}+h^{d}\right) $ coincides with the (rough path)\ translation of $%
\mathbf{X}$ by $h$;
\end{itemize}

then a martingale argument \cite{coutin-victoir-2005} shows that L\'{e}vy's
area is given by (the anti-symmetric part of) 
\begin{equation}
\mathbf{X}_{t}^{i,j}=\sum_{k,l}N_{k}^{i}N_{l}^{j}%
\int_{0}^{t}h_{u}^{i,k}dh_{u}^{j,l}  \label{level2KarhunerIntro}
\end{equation}%
and a notion of iterated integral of elements in $\mathcal{H}_{i},\mathcal{H}%
_{j}$ respectively is needed$.$ With a view towards Young integrals, this
underlines the importance of embedding the Cameron-Martin space into a $\rho 
$-variation path space. We will prove that formula (\ref{level2KarhunerIntro}%
) extends to the level of third iterated integrals and insist that
martingale arguments alone are not enough for this and subtle correction
terms need to be taken care of.

\bigskip

As was pointed out in \cite{feyel-pradelle-2006} in the context of
fractional Brownian motion, Karhunen-Loeve convergence + (iii), i.e.
Cameron-Martin perturbations of the lift are given by rough path
translation, imply a support description. In the generality of our
discussion (iii) may fail. Careful revision of the arguments led us to an
abstract support theorem for approximately continuous Wiener functionals;
somewhat similar in spirit to Aida-Kusuoka-Stroock \cite[Cor 1.13]{MR1354161}%
, cf. remark \ref{RemarkAKSetc}.

With a support description of Gaussian rough paths, one has typical rough
path corollaries such as a Stroock-Varadhan type support theorem for
solutions to rough differential equations%
\begin{equation*}
dY=V\left( Y\right) d\mathbf{X}\left( \omega \right) ,\,\,\,V=\left(
V_{1},\dots ,V_{d}\right)
\end{equation*}%
and support description for $S_{N}\left( \mathbf{X}\right) $, the
canonically defined lift of $\mathbf{X}$ to a process with values in the
step-$N$ free nilpotent group with $d$ generators, for any $N\geq 3$. In
contrast to the original motivation of Stroock-Varadhan, there is no
Markovian structure here and hence no applications to maximum principles
etc. Nonetheless, the support description of Gaussian rough path (and
resultingly: of iterated integrals up to any given order) has been a key
ingredient in establishing non-degeneracy of the Malliavin convariance
matrix under H\"{o}rmander's condition \cite{cass-friz-07}.

The construction of a "natural" lift of a class of Gaussian processes
containing fractional Brownian Motion with $H>1/4$ is due to Coutin-Qian, 
\cite{coutin-qian-02} and the condition for $H$ is optimal, \cite%
{lyons-qian-02, coutin-victoir-2005}. Support statements for lifted
fractional Brownian Motion for $H>\frac{1}{3}$ are proved in \cite%
{friz-victoir-04-Note}, \cite{feyel-pradelle-2006}; a Karhunen-Loeve type
approximations for fractional Brownian Motion is studied in \cite%
{millet-2005}. The present paper unifies and generalizes all these results.

\subsection{Notations}

Let $\left( E,d\right) $ be a metric space and $x\in C\left( \left[ 0,1%
\right] ,E\right) $. It then makes sense to speak of $\alpha $-H\"{o}lder-
and $p$-variation "norms" defined as%
\begin{equation*}
\left\Vert x\right\Vert _{\alpha -H\ddot{o}l}=\sup_{0\leq s<t\leq 1}\frac{%
d\left( x_{s},x_{t}\right) }{\left\vert t-s\right\vert ^{\alpha }}%
,\,\left\Vert x\right\Vert _{p-var}=\sup_{D=\left( t_{i}\right) }\left(
\sum_{i}d\left( x_{t_{i}},x_{t_{i+1}}\right) ^{p}\right) ^{1/p}\,\,.
\end{equation*}%
It also makes sense to speak of a $d_{\infty }$-distance of two such paths,%
\begin{equation*}
d_{\infty }\left( x,y\right) =\sup_{0\leq t\leq 1}d\left( x_{t},y_{t}\right)
.
\end{equation*}%
Given a positive integer $N$ the truncated tensor algebra of degree $N$ is
given by the direct sum 
\begin{equation*}
T^{N}\left( \mathbb{R}^{d}\right) =\mathbb{R}\oplus \mathbb{R}^{d}\oplus
...\oplus \left( \mathbb{R}^{d}\right) ^{\otimes N}.
\end{equation*}%
With tensor product $\otimes $, vector addition and usual scalar
multiplication, $T^{N}\left( \mathbb{R}^{d}\right) =\left( T^{N}\left( 
\mathbb{R}^{d}\right) ,\otimes ,+,.\right) $ is an algebra. Functions such
as $\exp $,$\ln :$ $T^{N}\left( \mathbb{R}^{d}\right) \rightarrow
T^{N}\left( \mathbb{R}^{d}\right) $ are defined immediately by their
power-series. Let $\pi _{i}$ denote the canonical projection from $%
T^{N}\left( \mathbb{R}^{d}\right) $ onto $\left( \mathbb{R}^{d}\right)
^{\otimes i}$. Let $p\in \lbrack 1,2)$ and $x\in $ $C^{p\text{-var}}\left( %
\left[ 0,1\right] ,\mathbb{R}^{d}\right) $, the space of continuous $\mathbb{%
R}^{d}$-valued paths of bounded $q$-variation. We define $\mathbf{x}\equiv
S_{N}(x):[0,1]\rightarrow T^{N}\left( \mathbb{R}^{d}\right) $ via iterated
(Young) integration, 
\begin{equation*}
\mathbf{x}_{t}\equiv
S_{N}(x)_{t}=1+\sum_{i=1}^{N}\int_{0<s_{1}<...<s_{i}<t}dx_{s_{1}}\otimes
...\otimes dx_{s_{i}}
\end{equation*}%
noting that $\mathbf{x}_{0}=1+0+...+0=\exp \left( 0\right) \equiv e$, the
neutral element for $\otimes $, and that $\mathbf{x}_{t}$ really takes
values in 
\begin{equation*}
G^{N}\left( \mathbb{R}^{d}\right) =\left\{ g\in T^{N}\left( \mathbb{R}%
^{d}\right) :\exists x\in C^{1\text{-var}}\left( \left[ 0,1\right] ,\mathbb{R%
}^{d}\right) :\text{ }g=S_{N}(x)_{1}\text{ }\right\} ,
\end{equation*}%
a submanifold of $T^{N}\left( \mathbb{R}^{d}\right) $ and, in fact, a Lie
group with product $\otimes $, called the free step-$N$ nilpotent group with 
$d$ generators. Because $\pi _{1}\left[ \mathbf{x}_{t}\right] =x_{t}-x_{0}$
we say that $\mathbf{x}=S_{N}(x)$ is the canonical lift of $x$. There is a
canonical notion of increments,$\,\mathbf{x}_{s,t}:=\mathbf{x}%
_{s}^{-1}\otimes \mathbf{x}_{t}.$The dilation operator $\delta :\mathbb{R}%
\times G^{N}\left( \mathbb{R}^{d}\right) \rightarrow G^{N}\left( \mathbb{R}%
^{d}\right) $ is defined by 
\begin{equation*}
\pi _{i}\left( \delta _{\lambda }(g)\right) =\lambda ^{i}\pi
_{i}(g),\,\,\,i=0,...,N
\end{equation*}%
and a continuous norm on $G^{N}\left( \mathbb{R}^{d}\right) $, homogenous
with respect to $\delta $, the \textit{Carnot-Caratheodory norm}, is given 
\begin{equation*}
\left\Vert g\right\Vert =\inf \left\{ \text{length}(x):x\in C^{1\text{-var}%
}\left( \left[ 0,1\right] ,\mathbb{R}^{d}\right) ,S_{N}(x)_{1}=g\right\} .
\end{equation*}%
By equivalence of continuous, homogenous norms there exists a constant $%
K_{N} $ such that%
\begin{equation*}
\frac{1}{K_{N}}\max_{i=1,...,N}|\pi _{i}(g)|^{1/i}\leq \left\Vert
g\right\Vert \leq K_{N}\max_{i=1,...,N}|\pi _{i}(g)|^{1/i}.
\end{equation*}%
The norm $\left\Vert \cdot \right\Vert $ induces a (left-invariant) metric
on $G^{N}\left( \mathbb{R}^{d}\right) $ known as \textit{Carnot-Caratheodory
metric}, $d(g,h):=\left\Vert g^{-1}\otimes h\right\Vert .$ Now let $x,y\in $ 
$C_{0}\left( [0,1],G^{N}\left( \mathbb{R}^{d}\right) \right) $, the space of
continuous $G^{N}\left( \mathbb{R}^{d}\right) $-valued paths started at the
neutral element $\exp \left( 0\right) =e$. We define $\alpha $-H\"{o}lder-
and $p$-variation distances 
\begin{eqnarray*}
d_{\alpha -H\ddot{o}l}\left( \mathbf{x,y}\right) &=&\sup_{0\leq s<t\leq 1}%
\frac{d\left( \mathbf{x}_{s,t},\mathbf{y}_{s,t}\right) }{\left\vert
t-s\right\vert ^{\alpha }},\,\,\,\,\, \\
d_{p-var}\left( \mathbf{x,y}\right) &=&\sup_{D=\left( t_{i}\right) }\left(
\sum_{i}d\left( \mathbf{x}_{t_{i},t_{i+1}},\mathbf{y}_{t_{i},t_{i+1}}\right)
^{p}\right) ^{1/p}
\end{eqnarray*}%
and also the "$0$-H\"{o}lder" distance, locally $1/N$-H\"{o}lder equivalent
to $d_{\infty }\left( \mathbf{x,y}\right) $,%
\begin{equation*}
d_{0}\left( \mathbf{x,y}\right) =d_{0-H\ddot{o}l}\left( \mathbf{x,y}\right)
=\sup_{0\leq s<t\leq 1}d\left( \mathbf{x}_{s,t},\mathbf{y}_{s,t}\right) .
\end{equation*}%
Note that $d_{\alpha -H\ddot{o}l}\left( \mathbf{x,}0\right) =\left\Vert 
\mathbf{x}\right\Vert _{\alpha -H\ddot{o}l},\,d_{p-var}\left( \mathbf{x,}%
0\right) =\left\Vert \mathbf{x}\right\Vert _{p-var}$ where $0$ denotes the
constant path $\exp \left( 0\right) $, or in fact, any constant path. The
following path spaces will be useful to us

\begin{enumerate}
\item[(i)] $C_{0}^{p-var}\left( \left[ 0,1\right] ,G^{N}\left( \mathbb{R}%
^{d}\right) \right) $: the set of continuous functions $\mathbf{x}$ from $%
\left[ 0,1\right] $ into $G^{N}\left( \mathbb{R}^{d}\right) $ such that $%
\left\Vert \mathbf{x}\right\Vert _{p-var}<\infty $ and $\mathbf{x}_{0}=\exp
\left( 0\right) $.

\item[(ii)] $C_{0}^{0,p-var}\left( \left[ 0,1\right] ,G^{N}\left( \mathbb{R}%
^{d}\right) \right) $: the $d_{p-var}$-closure of 
\begin{equation*}
\left\{ S_{N}\left( x\right) ,x:\left[ 0,1\right] \rightarrow \mathbb{R}^{d}%
\text{ smooth}\right\} .
\end{equation*}

\item[(iii)] $C_{0}^{1/p-H\ddot{o}l}\left( \left[ 0,1\right] ,G^{N}\left( 
\mathbb{R}^{d}\right) \right) $: the set of continuous functions $\mathbf{x}$
from $\left[ 0,1\right] $ into $G^{n}\left( \mathbb{R}^{d}\right) $ such
that $d_{1/p-H\ddot{o}l}\left( 0,\mathbf{x}\right) <\infty $ and $\mathbf{x}%
_{0}=\exp \left( 0\right) .$

\item[(iv)] $C_{0}^{0,1/p-H\ddot{o}l}\left( \left[ 0,1\right] ,G^{N}\left( 
\mathbb{R}^{d}\right) \right) $: the $d_{1/p-H\ddot{o}l}$-closure of 
\begin{equation*}
\left\{ S_{n}\left( x\right) ,x:\left[ 0,1\right] \rightarrow \mathbb{R}^{d}%
\text{ smooth}\right\} .
\end{equation*}
\end{enumerate}

Recall that a geometric $p$-rough path is an element of $C_{0}^{0,p-var}%
\left( \left[ 0,1\right] ,G^{[p]}\left( \mathbb{R}^{d}\right) \right) ,$ and
a weak geometric rough path is an element of $C_{0}^{p-var}\left( \left[ 0,1%
\right] ,G^{[p]}\left( \mathbb{R}^{d}\right) \right) .$ For a detail study
of these spaces and their properties the reader is referred to \cite%
{friz-victoir-04-Note}.

\section{Karhunen-Loeve Approximations}

Any choice of an orthonormal basis in $\mathcal{H}$, say $\left( h^{k}:k\in 
\mathbb{N}\right) $, yields a $L^{2}$-expansion of a Gaussian process $X$ as
(a.s. and $L^{2}$-convergent) sum of the form $X=\sum_{k\in \mathbb{N}%
}Z_{k}h^{k}$ where $Z_{k}:=\tilde{h}^{k}:=\xi \left( h^{k}\right) $ and $%
h\in \mathcal{H}\mapsto \tilde{h}\in L^{2}(\Omega )$ is the classical
isometry between $\mathcal{H}$ and the Gaussian subspace in $L^{2}\left(
\Omega \right) $, sometimes called Paley-Wiener map (see \cite%
{ledoux-talagrand-1991}, \cite{ledoux-1996}, \cite[Chapter 3.4]{DeuSt89} and
the appendix). As a reminder that we work with continuous Gaussian processes
with the concrete index set $\left[ 0,1\right] $, just as for Brownian
motion, we shall refer to $L^{2}$-approximation as Karhunen-Loeve (type)
approximations\footnote{%
Historically, according to a remark in \cite{jain-kallianpur-1970}, the
Karhunen-Loeve approximation corresponds to a specific choice of basis in $%
\mathcal{H}$, obtained from the eigenfunction of a (continuous) covariance
viewed as integral operator.}, in the same spirit as we prefer to call $%
\mathcal{H}$ Cameron-Martin space rather than Reproducing Kernel Hilbert
Space.

As in previous sections, let $X=(X^{i}:i=1,...,d)$ be a centered continuous
Gaussian process, with independent components, each with covariance $R$ of
finite $\rho $-variation for some $\rho \in \lbrack 1,2)$ and dominated by
some 2D control $\omega .$ Let $\mathbf{X}$ be the natural lift of $X$ to a $%
G^{3}\left( \mathbb{R}^{d}\right) $-valued process. If $\mathcal{H}%
_{i}\subset C\left( \left[ 0,1\right] ,\mathbb{R}\right) $ denotes the
Cameron-Martin space associated to $X^{i}$, the Cameron-Martin space to $X$
is identified with $\oplus _{i=1}^{d}\mathcal{H}_{i}$ and if $%
(h_{i}^{k})_{k\geq 1}$ is an orthonormal basis for $\mathcal{H}_{i}$ then $%
\left\{ \left( h_{i}^{k}\left( .\right) \right) _{i=1,..,d},k\geq 1\right\} $
is an orthonormal basis for $\oplus _{i=1}^{d}\mathcal{H}$. \ We can write $%
h^{k}=\left( h_{1}^{k},\cdots ,h_{d}^{k}\right) $.

\subsection{One Dimensional Estimates}

Our object of interest is $X=\left( X^{1},\ldots ,X^{d}\right) $, a $d$%
-dimensional real-valued, centered, continuous Gaussian process with
independent components. Resultingly, the covariance $R=R\left( s,t\right) $
is a diagonal matrix with $d$ entries and in discussing variational
regularity of the covariance of a Karhunen-Loeve approximations we may
assume that $X$ is in fact $1$-dimensional. For any $A\subset \mathbb{N}$ we
define%
\begin{equation*}
\mathcal{F}_{A}=\sigma \left( Z_{k},k\in A\right) ,\,\,\,X_{t}^{A}=\mathbb{E}%
\left[ X_{t}|\mathcal{F}_{A}\right] .
\end{equation*}%
As in \cite{friz-victoir-07-DEdrivenGaussian_I}, $\omega \left( \left[ a,b%
\right] \times \lbrack c,d]\right) $ stands for a (2D) control function
which controls the $\rho $-variation of $R\left( \cdot ,\ast \right) =%
\mathbb{E}\left[ X_{\cdot }X_{\ast }\right] $ over the indicated rectangle,
i.e.%
\begin{equation*}
\left\vert R\right\vert _{\rho \text{-var;}\left[ a,b\right] \times \left[
c,d\right] }^{\rho }\leq \omega \left( \left[ a,b\right] \times \lbrack
c,d]\right) .
\end{equation*}%
It follows from%
\begin{equation*}
\mathbb{E}\left( \left\vert X_{s,t}^{A}\right\vert ^{2}\right) \leq \mathbb{E%
}\left( \left\vert X_{s,t}\right\vert ^{2}\right) \leq \omega \left( \left[
s,t\right] ^{2}\right) ^{1/\rho }
\end{equation*}%
that $X^{A}$ can be taken with continuous sample paths. Moreover, $X^{A}$ is
a Gaussian process in its own right and we shall write $R^{A}$ for its
covariance function,%
\begin{equation*}
R^{A}\left( s,t\right) =\mathbb{E}\left[ X_{s}^{A}X_{t}^{A}\right] \text{.}
\end{equation*}

\begin{lemma}
\label{RegularityOfRK}Assume that $R$ is of finite $\rho $-variation, for
some $\rho \geq 1$. Then if $\min \left\{ \left\vert A\right\vert
,\left\vert A^{c}\right\vert \right\} <\infty $%
\begin{equation*}
\left\vert R^{A}\right\vert _{\rho \text{$-var$}}<\infty .
\end{equation*}%
In particular, if $\rho <2,$ there exists a natural lift of $X^{A}$ to a $%
G^{3}\left( \mathbb{R}^{d}\right) $-valued process denoted by $\mathbf{X}%
^{A} $.
\end{lemma}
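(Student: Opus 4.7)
The plan is to exploit the Karhunen--Loeve expansion of $X$ directly. If $X=\sum_{k}Z_{k}h^{k}$ in an orthonormal basis of $\mathcal{H}$, then $X^{A}=\sum_{k\in A}Z_{k}h^{k}$ is a (finite or co-finite) partial sum in the same basis. I first reduce to a one-dimensional statement: choosing the orthonormal basis of $\oplus_{i=1}^{d}\mathcal{H}_{i}$ so that every basis vector lies in a single $\mathcal{H}_{i}$, the conditional expectation preserves the independence of the $d$ components of $X^{A}$, so it is enough to bound the $\rho$-variation of the scalar covariance $R^{A}$ of each component and then feed the result into the cited Theorem~\ref{Thm1Cited} componentwise.

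For $\left\vert A\right\vert <\infty $, the identity
\[R^{A}(s,t)=\sum_{k\in A}h^{k}(s)\,h^{k}(t)\]
expresses $R^{A}$ as a finite sum of rank-one kernels. For each summand $R^{k}(s,t):=h^{k}(s)h^{k}(t)$, the rectangular increment factors as $\left[ h^{k}(s)-h^{k}(s^{\prime })\right] \left[ h^{k}(t)-h^{k}(t^{\prime })\right] $, so its 2D $\rho $-variation seminorm is at most $\left\vert h^{k}\right\vert _{\rho \text{-var};[0,1]}^{2}$. This is finite because the Cameron--Martin embedding established in Part~I forces every $h\in \mathcal{H}$ to have finite 1D $\rho $-variation. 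The triangle inequality for the 2D $\rho $-variation, applied to the finite sum over $k\in A$, then yields $\left\vert R^{A}\right\vert _{\rho \text{-var}}<\infty $.

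For $\left\vert A^{c}\right\vert <\infty $, I bootstrap from the previous case using independence: the families $\{Z_{k}:k\in A\}$ and $\{Z_{k}:k\in A^{c}\}$ are independent, hence so are the Gaussian processes $X^{A}$ and $X^{A^{c}}$, and the decomposition $X=X^{A}+X^{A^{c}}$ yields $R=R^{A}+R^{A^{c}}$. The previous case applied to the now-finite $A^{c}$ gives $\left\vert R^{A^{c}}\right\vert _{\rho \text{-var}}<\infty $, so the seminorm triangle inequality provides the same bound for $R^{A}=R-R^{A^{c}}$.

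For the ``in particular'' assertion, once $R^{A}$ has finite $\rho $-variation with $\rho <2$ componentwise, the process $X^{A}$ fits exactly into the setting of Theorem~\ref{Thm1Cited}. Applied to piecewise linear approximations $Z^{n,A}$ of $X^{A}$, it makes $(S_{3}(Z^{n,A}))_{n}$ Cauchy in the $d_{p\text{-var}}$ metric for any $p>2\rho $, and the limit defines the natural lift $\mathbf{X}^{A}$. I do not expect any genuine obstacle here; the only delicate point is the multiplicativity of rectangular increments for rank-one covariance kernels, which is what lets a seminorm triangle inequality convert the Part~I 1D embedding into 2D finiteness of $R^{A}$.
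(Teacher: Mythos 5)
Your proposal is correct and follows essentially the same route as the paper: both arguments rest on writing $R^{A}$ (or $R-R^{A}$) as a finite sum of rank-one kernels $h^{k}\otimes h^{k}$, bounding each one's 2D $\rho$-variation by $\left\vert h^{k}\right\vert _{\rho\text{-var}}^{2}$ via the factorization of rectangular increments and the Part~I Cameron--Martin embedding, and concluding with the triangle inequality (the paper treats the cofinite case first and remarks that the finite case is easier, but the content is identical). The ``in particular'' step via Theorem~\ref{Thm1Cited} and Cauchy sequences of piecewise linear approximations is likewise exactly how the paper defines the natural lift.
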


\begin{proof}
Assume first $\left\vert A^{c}\right\vert <\infty $. From \cite[Example 7,
Proposition 16]{friz-victoir-07-DEdrivenGaussian_I}, using $\left\vert
h_{k}\right\vert _{\mathcal{H}}=1$, 
\begin{eqnarray*}
\left\vert h_{k}\otimes h_{k}\right\vert _{\rho \text{$-var$;}\left[ s,t%
\right] ^{2}} &\leq &\left\vert \left( h_{k}\right) \right\vert _{p\text{$%
-var$;}\left[ s,t\right] }^{2} \\
&\leq &\left\vert R\right\vert _{2-var;\left[ s,t\right] ^{2}}^{2}.
\end{eqnarray*}%
It follows that%
\begin{eqnarray*}
\left\vert R^{A}\right\vert _{p\text{$-var$;}\left[ s,t\right] ^{2}}
&=&\left\vert R-\sum_{k\in A^{c}}h_{k}\otimes h_{k}\right\vert _{p\text{$%
-var $;}\left[ s,t\right] ^{2}} \\
&\leq &\left\vert R\right\vert _{p\text{$-var$;}\left[ s,t\right]
^{2}}+\sum_{k\in A^{c}}\left\vert h_{k}\otimes h_{k}\right\vert _{p\text{$%
-var$;}\left[ s,t\right] ^{2}} \\
&\leq &\left( 1+m\right) \left\vert R\right\vert _{2-var;\left[ s,t\right]
^{2}}^{2}.
\end{eqnarray*}%
If $A$ is finite, the proof is even easier.
\end{proof}

The interest is in the above lemma is for $\rho \in \lbrack 1,2)$. To obtain
uniform estimates valid for all $A\subset \mathbb{N}$, we have to work in $2$%
-variation (but see remark below).

\begin{lemma}
\label{KL_uniform_2var}Assume that $R$ is of finite $2$-variation. Then, the 
$R^{A}$ has finite $2$-variation, uniformly over all $A\subset \mathbb{N}$.
More precisely,%
\begin{equation*}
\sup_{A\subset \mathbb{N}}\left\vert R^{A}\right\vert _{2\text{$-var$;}\left[
s,t\right] ^{2}}\leq \left\vert R\right\vert _{2\text{$-var$;}\left[ s,t%
\right] ^{2}}.
\end{equation*}
\end{lemma}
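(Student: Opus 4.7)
Fix a partition $D=(t_i)$ of $[s,t]$. The quantity to bound is
\begin{equation*}
S^{A}(D)^{2}:=\sum_{i,j}\bigl|\mathbb{E}[X^{A}_{t_{i},t_{i+1}}X^{A}_{t_{j},t_{j+1}}]\bigr|^{2}=\|M^{A}\|_{F}^{2},
\end{equation*}
where $M^{A}$ is the symmetric matrix with entries $M^{A}_{ij}=\mathbb{E}[X^{A}_{t_{i},t_{i+1}}X^{A}_{t_{j},t_{j+1}}]$, and similarly $M_{ij}=\mathbb{E}[X_{t_{i},t_{i+1}}X_{t_{j},t_{j+1}}]$. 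Taking the supremum over $D$ reduces the lemma to the matrix inequality $\|M^{A}\|_{F}^{2}\le \|M\|_{F}^{2}$.

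The first step is the orthogonal decomposition. Set $X^{A^{c}}:=X-X^{A}$. Since $\{Z_{k}\}_{k\in A}$ and $\{Z_{k}\}_{k\in A^{c}}$ are independent Gaussian families, the processes $X^{A}$ and $X^{A^{c}}$ are \emph{independent}, both centered. Consequently the cross covariances vanish and
\begin{equation*}
M=M^{A}+M^{A^{c}},
\end{equation*}
entrywise. Moreover each of $M^{A}$ and $M^{A^{c}}$ is symmetric positive semi-definite, being the Gram matrix of the random vector of increments $(X^{A}_{t_{i},t_{i+1}})_{i}$ (resp.\ $(X^{A^{c}}_{t_{i},t_{i+1}})_{i}$) in $L^{2}(\Omega)$.

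The second step exploits positivity. For any two symmetric PSD matrices $B,C$ one has $\operatorname{tr}(BC)\ge 0$ (write $B=U^{T}U$ so $\operatorname{tr}(BC)=\operatorname{tr}(UCU^{T})$ is the trace of a PSD matrix). Applied to $B=M^{A}$ and $C=M^{A^{c}}$ and using $\langle M^{A},M^{A^{c}}\rangle_{F}=\operatorname{tr}(M^{A}M^{A^{c}})$ for symmetric matrices, we get
\begin{equation*}
\|M\|_{F}^{2}=\|M^{A}\|_{F}^{2}+2\operatorname{tr}(M^{A}M^{A^{c}})+\|M^{A^{c}}\|_{F}^{2}\ \ge\ \|M^{A}\|_{F}^{2}.
\end{equation*}
Taking the supremum over $D$ yields $|R^{A}|_{2\text{-}\mathrm{var};[s,t]^{2}}\le |R|_{2\text{-}\mathrm{var};[s,t]^{2}}$ uniformly in $A\subset\mathbb{N}$.

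There is no real obstacle: the whole proof rests on the two independence/positivity observations above, and the fact that the Frobenius norm (i.e.\ the $\ell^{2}$-sum over rectangles) is the \emph{right} object to put the decomposition $M=M^{A}+M^{A^{c}}$ to work. The one place to be careful is to note why this argument is special to exponent~$2$: for general $\rho$ the $\ell^{\rho}$-sum of entries of a sum of PSD matrices does not dominate that of the summands, which is precisely why in Lemma~\ref{RegularityOfRK} (valid for all $\rho\ge 1$) one had to assume $\min\{|A|,|A^{c}|\}<\infty$, while here the $\rho=2$ PSD-cross-term argument gives a bound uniform over \emph{all} $A\subset \mathbb{N}$.
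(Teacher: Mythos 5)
Your proof is correct. The orthogonal decomposition $X=X^{A}+X^{A^{c}}$ into independent centered Gaussian pieces does give $M=M^{A}+M^{A^{c}}$ entrywise with both summands PSD Gram matrices, the fact that $\operatorname{tr}(BC)\ge 0$ for symmetric PSD $B,C$ is correctly justified, and expanding $\|M\|_{F}^{2}$ then yields $\|M^{A}\|_{F}\le\|M\|_{F}\le |R|_{2\text{-}\mathrm{var};[s,t]^{2}}$ for every partition, which is the claim.

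The route differs from the paper's in packaging, though the two arguments rest on the same two pillars (orthogonality of the Karhunen--Lo\`eve decomposition and positivity of Gram matrices). The paper tests $M^{A}$ against an \emph{arbitrary} PSD matrix $\beta$: it diagonalizes $\beta=P^{T}\mathrm{diag}(d_{i})P$, applies Bessel's inequality $\sum_{k\in A}\mathbb{E}(Z_{k}(PX)_{i})^{2}\le\mathbb{E}((PX)_{i}^{2})$ to get $\langle\beta,M^{A}\rangle_{F}\le\langle\beta,M\rangle_{F}$, and then chooses $\beta=M^{A}$ and finishes with Cauchy--Schwarz, $\|M^{A}\|_{F}^{2}\le\|M^{A}\|_{F}\,\|M\|_{F}$. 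Your version short-circuits this by decomposing $M$ directly and observing that the cross term $2\operatorname{tr}(M^{A}M^{A^{c}})$ and the remainder $\|M^{A^{c}}\|_{F}^{2}$ are both nonnegative; this is arguably cleaner and avoids the diagonalization entirely. What the paper's formulation buys, as its subsequent remark makes explicit, is flexibility in the choice of $\beta$: taking $\beta_{ij}$ to be an entrywise (Hadamard) power $\mathbb{E}(X_{i}^{A}X_{j}^{A})^{\rho-1}$, which is PSD for integer $\rho-1$ by the Schur product theorem, extends the bound to integer $\rho>2$, whereas the direct expansion of the squared Frobenius norm is genuinely tied to the exponent $2$ --- a limitation you correctly identify at the end.
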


\begin{proof}
Let $D=\left( t_{i}\right) $ a subdivision of $\left[ s,t\right] $ and set $%
X_{i}^{A}=X_{t_{i},t_{i+1}}^{A}.$ Let $\beta $ be a positive semi-definite
symmetric matrix, and let us estimate $\left\vert \sum_{i,j}\beta _{i,j}%
\mathbb{E}\left( X_{i}^{A}X_{j}^{A}\right) \right\vert .$ Now 
\begin{equation*}
\mathbb{E}\left( X_{i}^{A}X_{j}^{A}\right) =\sum_{k\in A}\mathbb{E}\left(
Z_{k}X_{i}\right) \mathbb{E}\left( Z_{k}X_{j}\right) =\frac{1}{2}\sum_{k\in
A}\mathbb{E}\left( \left( Z_{k}^{2}-\mathbb{E}\left( Z_{k}^{2}\right)
\right) X_{i}X_{j}\right) ,
\end{equation*}%
so that 
\begin{equation*}
\sum_{i,j}\beta _{i,j}\mathbb{E}\left( X_{i}^{A}X_{j}^{A}\right) =\frac{1}{2}%
\sum_{k\in A}\mathbb{E}\left( \left( Z_{k}^{2}-\mathbb{E}\left(
Z_{k}^{2}\right) \right) \sum_{i,j}\beta _{i,j}X_{i}X_{j}\right) .
\end{equation*}%
As $\beta $ is symmetric, we can write $\beta =P^{T}$\textrm{diag}$\left(
d_{1},\dots ,d_{\#D}\right) P,$ with $PP^{T}$ the identity matrix and
(non-negative) eigenvalues $\left( d_{i}\right) $. By simple linear algebra, 
\begin{equation*}
\sum_{i,j}\beta _{i,j}X_{i}X_{j}=\left( PX\right) ^{T}\mathrm{diag}\left(
\dots \right) \left( PX\right) =\sum_{i}d_{i}\left( PX\right) _{i}^{2}.
\end{equation*}%
and so 
\begin{eqnarray*}
\sum_{i,j}\beta _{i,j}\mathbb{E}\left( X_{i}^{A}X_{j}^{A}\right)
&=&\sum_{k\in A}\sum_{i}d_{i}\frac{1}{2}\mathbb{E}\left( \left( Z_{k}^{2}-%
\mathbb{E}\left( Z_{k}^{2}\right) \right) \left( PX\right) _{i}^{2}\right) \\
&=&\sum_{i}d_{i}\sum_{k\in A}\mathbb{E}\left( Z_{k}\left( PX\right)
_{i}\right) ^{2} \\
&\leq &\sum_{i}d_{i}\mathbb{E}\left( \left( PX\right) _{i}^{2}\right) \text{
\ \ \ \ (Parseval inequality)} \\
&=&\mathbb{E}\left( \left( PX\right) ^{T}D\left( PX\right) \right) \\
&=&\sum_{i,j}\beta _{i,j}\mathbb{E}\left( X_{i}X_{j}\right) \\
&\leq &\left\vert \beta \right\vert _{l^{2}}\left\vert R\right\vert _{2\text{%
$-var$}}.\text{ \ \ \ \ \ \ \ \ (H\"{o}lder inequality)}
\end{eqnarray*}%
Applying this estimate to $\beta _{i,j}=\mathbb{E}\left(
X_{i}^{A}X_{j}^{A}\right) $ we find 
\begin{equation*}
\text{ }\sqrt{\sum_{i,j}\left\vert \mathbb{E}\left(
X_{i}^{A}X_{j}^{A}\right) \right\vert ^{2}}\leq \left\vert R\right\vert _{2%
\text{$-var$}}.
\end{equation*}%
The proof is finished by taking the supremum over all dissections of $\left[
s,t\right] $.
\end{proof}

\begin{remark}
The previous proof can easily extends to showing that if $R$ is of finite $%
\rho $-variation, where $\rho $ is an integer greater than $2$, then for all 
$A\subset \mathbb{N}$ and $s<t$, $\left\vert R^{A}\right\vert _{\rho -var,%
\left[ s,t\right] ^{2}}\leq \left\vert R\right\vert _{\rho -var,\left[ s,t%
\right] ^{2}}.$ This is done by choosing $\beta _{i,j}=\mathbb{E}\left(
X_{i}^{n,m}X_{j}^{n,m}\right) ^{\rho -1}$and indeed if $\rho -1$ $\in 
\mathbb{N}$ then $\beta $ is a positive symmetric matrix (this is a simple
consequence of Hadamard- Schur's lemma). We could not prove (or disprove)
this for general $\rho \geq 1$; with $\beta $ being defined as fractional
Hadamard power,%
\begin{equation*}
\mathbb{E}\left( X_{i}^{n,m}X_{j}^{n,m}\right) ^{\rho -1}\text{sign}\left[ 
\mathbb{E}\left( X_{i}^{n,m}X_{j}^{n,m}\right) \right]
\end{equation*}%
If true for $\rho \in \lbrack 1,2)$, the present convergence results would
have followed directly from Theorem \ref{Thm1Cited}.
\end{remark}

\subsection{Uniform Bounds on the Modulus and Convergence}

We now assume that $R$ has finite $\rho $-variation for some $\rho \in
\lbrack 1,2)$ dominated by some 2D control $\omega $, and we fix $A\subset 
\mathbb{N}$, finite or with finite complement, so that $X^{A}$ admits a
natural $G^{3}\left( \mathbb{R}^{d}\right) $-valued lift, denoted $\mathbf{X}%
^{A}$. Of course, $\mathbf{X}^{\mathbb{N}}=\mathbf{X}$.

\begin{lemma}[Martingale]
\label{martingaleLEvel3}For all $s<t$ in $\left[ 0,1\right] $, the following
equality holds in $g_{3}\left( \mathbb{R}^{d}\right) $,%
\begin{eqnarray*}
\mathbb{E}\left( \ln \left( \mathbf{X}_{s,t}\right) |\mathcal{F}_{A}\right)
&=&\ln \left( \mathbf{X}_{s,t}^{A}\right) \\
&&+\frac{1}{12}\sum_{i\neq j}X_{s,t}^{A;j}R_{X^{A^{c};i}}\left( 
\begin{array}{c}
s \\ 
t%
\end{array}%
,%
\begin{array}{c}
s \\ 
t%
\end{array}%
\right) \left[ e_{i},\left[ e_{i},e_{j}\right] \right] \\
&&-\frac{1}{2}\sum_{i\neq j}\int_{s}^{t}R_{X^{A^{c};i}}\left( 
\begin{array}{c}
u \\ 
t%
\end{array}%
,%
\begin{array}{c}
s \\ 
u%
\end{array}%
\right) dX_{u}^{A,j}\left[ e_{i},\left[ e_{i},e_{j}\right] \right] .
\end{eqnarray*}%
(The integral which appears in the last line is a Young-Wiener integral in
the sense of \cite[Prop. 38]{friz-victoir-07-DEdrivenGaussian_I}).
\end{lemma}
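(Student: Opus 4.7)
Set $Y:=X^{A}$ and $Z:=X-X^{A}=X^{A^{c}}$. By standard Gaussian conditioning, $Y$ is $\mathcal F_{A}$-measurable while $Z$ is centred, independent of $\mathcal F_{A}$, with components mutually independent (inherited from $X$). Lemma~\ref{RegularityOfRK} supplies natural $G^{3}(\mathbb R^{d})$-valued lifts $\mathbf Y$ and $\mathbf Z$, so every iterated integral appearing below is a bona fide $L^{2}$-limit in the sense of \cite{friz-victoir-07-DEdrivenGaussian_I}. At levels $1$ and $2$ of $g_{3}(\mathbb R^{d})$ the identity is immediate from the conditional centring of $Z$ combined with the component independence of $X$, so all the work sits at level $3$.

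I would start from the representation
\[
\pi_{3}\log\mathbf X_{s,t}=\mathbf X^{(3)}_{s,t}-\tfrac{1}{2}\bigl(X_{s,t}\otimes\mathbf X^{(2)}_{s,t}+\mathbf X^{(2)}_{s,t}\otimes X_{s,t}\bigr)+\tfrac{1}{3}X_{s,t}^{\otimes 3}
\]
and substitute $X=Y+Z$ into every iterated integral, expanding each tensor as a sum over the choice of $dY$ versus $dZ$ at each integration variable. Conditioning on $\mathcal F_{A}$ then kills every summand containing an odd number of $Z$-factors (odd Gaussian moments vanish, and this persists through the $L^{2}$-Riemann-sum approximation), while the all-$Y$ summands collapse exactly to $\pi_{3}\log\mathbf Y_{s,t}=\pi_{3}\log\mathbf X^{A}_{s,t}$. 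The remaining one-$Y$/two-$Z$ summands carry the correction: component independence of $Z$ forces the two $Z$-indices to coincide (call them both $i$), and the joint conditional expectation of the two $Z$-factors produces exactly a rectangular increment of $R_{X^{A^{c};i}}$ on the corresponding sub-rectangle of $[s,t]^{2}$. Enumerating the three positions the surviving $Y$-factor can occupy inside $\mathbf X^{(3)}$ and in the lower-order corrections, and after a Fubini exchange together with the Young-Wiener integration by parts from \cite[Prop.~38]{friz-victoir-07-DEdrivenGaussian_I}, each non-zero summand becomes either a boundary contribution $Y^{j}_{s,t}R_{X^{A^{c};i}}\!\bigl(\binom{s}{t},\binom{s}{t}\bigr)\,e_{a}\otimes e_{b}\otimes e_{c}$ or a one-dimensional Young-Wiener integral $\int_{s}^{t}R_{X^{A^{c};i}}\!\bigl(\binom{u}{t},\binom{s}{u}\bigr)\,dY^{j}_{u}\,e_{a}\otimes e_{b}\otimes e_{c}$, with $(a,b,c)$ a permutation of $(i,i,j)$.

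The main, and delicate, step is to collapse the resulting weighted sum of nine tensor triples. Using the expansion $[e_{i},[e_{i},e_{j}]]=e_{i}\otimes e_{i}\otimes e_{j}-2\,e_{i}\otimes e_{j}\otimes e_{i}+e_{j}\otimes e_{i}\otimes e_{i}$ together with the algebraic fact that the totally symmetric part of the logarithm of a group-like element vanishes, I expect the non-Lie components to cancel and the final correction to land along the single direction $[e_{i},[e_{i},e_{j}]]$, with the constants $\tfrac{1}{12}$ and $-\tfrac{1}{2}$ emerging from the $-\tfrac{1}{2}$ and $+\tfrac{1}{3}$ prefactors in $\pi_{3}\log$ combined with the combinatorics of which integration slot carries the $Y$-factor (and whether the resulting conditional $ZZ$-expectation is ``adjacent'' or ``separated'' in the iterated integral). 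This algebraic collapse, together with the prior conversion of the raw diagonal expectation $\mathbb E[\int f(r_{1},r_{2})\,dZ^{i}_{r_{1}}dZ^{i}_{r_{2}}]$ into a single Young-Wiener integral of $R_{X^{A^{c};i}}(\binom{u}{t},\binom{s}{u})$ against $dY^{j}$, is where the argument becomes most technical and where, as hinted in the introduction, a plain martingale argument no longer suffices.
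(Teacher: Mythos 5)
Your plan follows essentially the same route as the paper: write $X=X^{A}+X^{A^{c}}$, condition on $\mathcal F_{A}$ so that (by centring, odd Gaussian moments, and independence of components) the only surviving corrections are the terms with exactly two $X^{A^{c};i}$-factors and one $X^{A;j}$-factor, and then convert the resulting conditional expectations into rectangular increments of $R_{X^{A^{c};i}}$ integrated against $dX^{A;j}$ in the Young--Wiener sense. The one place where your argument is left schematic --- the ``delicate algebraic collapse'' onto $[e_{i},[e_{i},e_{j}]]$ with constants $\tfrac1{12}$ and $-\tfrac12$ --- is exactly what the paper short-circuits by quoting \cite[Prop.~58]{friz-victoir-07-DEdrivenGaussian_I}, which already expresses $\pi_{3}\ln\mathbf X_{s,t}$ in the Lie-bracket basis with coefficients $1$, $\tfrac1{12}$, $-\tfrac12$ attached to $\mathbf X^{i,i,j}_{s,t}$, $|X^{i}_{s,t}|^{2}X^{j}_{s,t}$, $X^{i}_{s,t}\mathbf X^{i,j}_{s,t}$, after which the stated formula follows from the three conditional-expectation computations and the elementary identity $\tfrac12\mathbb E(|X^{A^{c};i}_{s,u}|^{2})-\tfrac12\mathbb E(X^{A^{c};i}_{s,t}X^{A^{c};i}_{s,u})=-\tfrac12\mathbb E(X^{A^{c};i}_{u,t}X^{A^{c};i}_{s,u})$.
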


\begin{remark}
Projection to $g_{2}\left( \mathbb{R}^{d}\right) $ yields to pleasant
equality $\mathbb{E}\left( \ln \left( \mathbf{X}_{s,t}\right) |\mathcal{F}%
_{A}\right) =\ln \left( \mathbf{X}_{s,t}^{A}\right) $which explains why
martingale arguments \cite{friz-05}, \cite{friz-victoir-05}, \cite%
{feyel-pradelle-2006}, \cite{coutin-victoir-2005} are enough to discuss the
step-$2$ case. In contrast, the present lemma shows clearly that martingale
arguments are not enough to handle the step-$3$ case.
\end{remark}

\begin{proof}
Our proposition at level $1$ is $\mathbb{E}\left( \pi _{1}\left( \ln \mathbf{%
X}_{s,t}\right) |\mathcal{F}_{A}\right) =\pi _{1}\left( \ln \mathbf{X}%
_{s,t}^{A}\right) ,$ which is (almost) the definition of $X^{A}.$ The
estimate at level $2$ is implies by $\mathbb{E}\left[ \mathbf{X}_{s,t}^{i,j}|%
\mathcal{F}_{A}\right] =\left( \mathbf{X}^{A}\right) _{s,t}^{i,j}.$ This is
fairly straightforward to prove:\ one just need to note that conditioning
equal $L^{2}$-projection is (trivially) $L^{2}$-continuous and recalling
that both $\mathbf{X}$ and $\mathbf{X}^{A}$ are $L^{2}$-limit of lifted
piecewise linear approximations. Level $3$ statements is more complicated.
We can see as above that for distinct indices $i,j,k$%
\begin{equation*}
\mathbb{E}\left[ \mathbf{X}_{s,t}^{i,j,k}|\mathcal{F}_{A}\right] =\left( 
\mathbf{X}^{A}\right) _{s,t}^{i,j,k}.
\end{equation*}%
Hence, from \cite[Prop. 58, Appendix III]{friz-victoir-07-DEdrivenGaussian_I}%
, we see that $\mathbb{E}\left( \ln \left( \mathbf{X}_{s,t}\right) |\mathcal{%
F}_{A}\right) -\ln \left( \mathbf{X}_{s,t}^{A}\right) $ is equal to%
\begin{eqnarray*}
&&\sum_{i\neq j}\mathbb{E}\left( \left. \left\{ \mathbf{X}_{s,t}^{i,i,j}+%
\frac{1}{12}\left\vert X_{s,t}^{i}\right\vert ^{2}X_{s,t}^{j}-\frac{1}{2}%
X_{s,t}^{i}\mathbf{X}_{s,t}^{i,j}\right\} \right\vert \mathcal{F}_{A}\right) %
\left[ e_{i},\left[ e_{i},e_{j}\right] \right] \\
&&-\sum_{i\neq j}\left( \left( \mathbf{X}_{s,t}^{A}\right) ^{i,i,j}+\frac{1}{%
12}\left\vert \left( X^{A}\right) _{s,t}^{i}\right\vert ^{2}\left(
X^{A}\right) _{s,t}^{j}-\frac{1}{2}\left( X^{A}\right) _{s,t}^{i}\left( 
\mathbf{X}^{A}\right) _{s,t}^{i,j}\right) \left[ e_{i},\left[ e_{i},e_{j}%
\right] \right] .
\end{eqnarray*}%
All the three terms can be written as sums (or $L^{2}$-limits thereof)
involving terms of form $X_{r,s}^{i}X_{t,u}^{i}X_{v,w}^{j}$ and since (write 
$X_{r,s}^{i}=X_{r,s}^{A;i}+X_{r,s}^{A^{c};i}$ and similarly for the other
terms)%
\begin{equation*}
\mathbb{E}\left( X_{r,s}^{i}X_{t,u}^{i}X_{v,w}^{j}|\mathcal{F}_{A}\right)
-\left( X^{A}\right) _{r,s}^{i}\left( X^{A}\right) _{t,u}^{i}\left(
X^{A}\right) _{v,w}^{j}=X_{v,w}^{A;j}\mathbb{E}\left(
X_{r,s}^{A^{c};i}X_{t,u}^{^{A^{c}}i}\right) .
\end{equation*}%
\ After integration, we therefore obtain%
\begin{eqnarray*}
\mathbb{E}\left( \mathbf{X}_{s,t}^{i,i,j}|\mathcal{F}_{A}\right) -\left( 
\mathbf{X}_{s,t}^{A}\right) ^{i,i,j} &=&\frac{1}{2}\int_{s}^{t}\mathbb{E}%
\left( \left\vert X_{s,u}^{A^{c};i}\right\vert ^{2}\right) dX_{u}^{A;j} \\
&=&\frac{1}{2}\int_{s}^{t}R_{X^{A^{c};i}}\left( 
\begin{array}{c}
s \\ 
u%
\end{array}%
,%
\begin{array}{c}
s \\ 
u%
\end{array}%
\right) dX_{u}^{A,j},
\end{eqnarray*}%
\begin{eqnarray*}
\mathbb{E}\left( \left\vert X_{s,t}^{i}\right\vert ^{2}X_{s,t}^{j}|\mathcal{F%
}_{A}\right) -\left( X_{s,t}^{A;i}\right) ^{2}X_{s,t}^{A,j} &=&X_{s,t}^{A;j}%
\mathbb{E}\left( \left\vert X_{s,t}^{A^{c},i}\right\vert ^{2}\right) \\
&=&X_{s,t}^{A;j}R_{X^{A^{c};i}}\left( 
\begin{array}{c}
s \\ 
t%
\end{array}%
,%
\begin{array}{c}
s \\ 
t%
\end{array}%
\right)
\end{eqnarray*}%
and%
\begin{eqnarray*}
\mathbb{E}\left( X_{s,t}^{i}\mathbf{X}_{s,t}^{i,j}|\mathcal{F}_{A}\right)
-X_{s,t}^{A;i}\mathbf{X}_{s,t}^{A;i,j} &=&\int_{s}^{t}\mathbb{E}\left(
X_{s,t}^{A^{c};i}X_{s,u}^{A^{c};i}\right) dX_{u}^{A;j} \\
&=&\int_{s}^{t}R_{X^{A^{c};i}}\left( 
\begin{array}{c}
s \\ 
t%
\end{array}%
,%
\begin{array}{c}
s \\ 
u%
\end{array}%
\right) dX_{u}^{A,j}
\end{eqnarray*}%
That concludes the proof.
\end{proof}

\begin{proposition}
For all $s<t,A,i\neq j$, for some constant $C,$%
\begin{equation*}
\left\vert \int_{s}^{t}R_{X^{A^{c};i}}\left( 
\begin{array}{c}
u \\ 
t%
\end{array}%
,%
\begin{array}{c}
s \\ 
u%
\end{array}%
\right) dX_{u}^{A,j}\right\vert _{L^{2}}^{2}\leq C\omega \left( \left[ s,t%
\right] ^{2}\right) ^{3/\rho }\text{.}
\end{equation*}
\end{proposition}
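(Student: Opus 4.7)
The plan is to exploit independence to recognize the integral as a Wiener integral with deterministic integrand, express its $L^{2}$-norm as a 2D Young--Stieltjes integral, and then bound it using a 2D Young inequality. The target exponent $3/\rho$ will arise as the product $\omega^{2/\rho}\cdot\omega^{1/\rho}$, the first factor coming from $f\otimes f$ and the second from $R_{X^{A;j}}$.

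First, since the components of $X$ are independent and $i\neq j$, the process $X^{A^{c};i}$ is independent of the integrator $X^{A;j}$. Therefore the integrand
\begin{equation*}
f(u) \;:=\; R_{X^{A^{c};i}}\!\left(\begin{array}{c} u\\ t\end{array},\begin{array}{c} s\\ u\end{array}\right) \;=\; \mathbb{E}\!\left[X^{A^{c};i}_{u,t}\,X^{A^{c};i}_{s,u}\right]
\end{equation*}
is a deterministic function of $u$. Writing $I := \int_{s}^{t} f(u)\,dX^{A;j}_{u}$, the Wiener--Young isometry (Proposition~38 of the companion paper) gives
\begin{equation*}
|I|_{L^{2}}^{2} \;=\; \iint_{[s,t]^{2}} f(u)\,f(v)\, dR_{X^{A;j}}(u,v),
\end{equation*}
which is a genuine 2D Young--Stieltjes integral once suitable variational control is established.

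The key variational estimate is $|f|_{\rho\text{-var};[s,t]}^{\rho}\le C\,\omega([s,t]^{2})$. It is obtained by expanding the rectangular increment
\begin{equation*}
f(u)-f(v) \;=\; R_{X^{A^{c};i}}\!\left(\begin{array}{c} u,v\\ s,u\end{array}\right) \;-\; R_{X^{A^{c};i}}\!\left(\begin{array}{c} u,v\\ v,t\end{array}\right),\qquad s\le u\le v\le t,
\end{equation*}
and noting that for any partition $s=u_{0}<\cdots<u_{N}=t$ the families $\{[u_{i},u_{i+1}]\times[s,u_{i}]\}_{i}$ and $\{[u_{i},u_{i+1}]\times[u_{i+1},t]\}_{i}$ each consist of pairwise disjoint rectangles inside $[s,t]^{2}$. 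Thus $\sum_{i}|f(u_{i+1})-f(u_{i})|^{\rho}$ is dominated by the 2D $\rho$-variation of $R_{X^{A^{c};i}}$ on such disjoint sub-families, which, by a Parseval-type computation parallel to Lemma~\ref{KL_uniform_2var} (applied with $A^{c}$ in place of $A$), is controlled by $C\,\omega([s,t]^{2})$ uniformly in $A$.

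Finally, the 2D Young inequality applied to $F(u,v):=f(u)f(v)$ against $R_{X^{A;j}}$ yields
\begin{equation*}
|I|_{L^{2}}^{2} \;\le\; C\,|F|_{(\rho,\rho)\text{-var};[s,t]^{2}}\cdot |R_{X^{A;j}}|_{\rho\text{-var};[s,t]^{2}} \;\le\; C\,\omega([s,t]^{2})^{2/\rho}\cdot \omega([s,t]^{2})^{1/\rho},
\end{equation*}
where $|F|_{(\rho,\rho)\text{-var}}\le |f|_{\rho\text{-var}}^{2}$ follows from the tensor-product structure, and the Young condition $2/\rho>1$ is satisfied precisely because $\rho\in[1,2)$. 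The main obstacle is the uniform-in-$A$ $\rho$-variation bound in Step~3: the remark following Lemma~\ref{KL_uniform_2var} indicates that a general uniform $\rho$-variation extension of Parseval's argument is open for $\rho\in[1,2)$. What saves the proof is that only the very special disjoint rectangle sub-families produced by the identity for $f(u)-f(v)$ need to be controlled, and these are amenable to the same Parseval-type estimate that underlies Lemma~\ref{KL_uniform_2var}, once rewritten for the restricted class of rectangles.
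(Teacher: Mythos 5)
Your overall strategy (view the integral as a Wiener integral of the deterministic function $f$, bound its $L^2$ norm by a 2D Young estimate against the integrator's covariance, and control the variation of $f$ via rectangular increments of $R_{X^{A^c;i}}$) is the right shape, but two of your key inputs are precisely the uniform-in-$A$ $\rho$-variation bounds that the paper says it cannot prove. First, your final inequality uses $\left\vert R_{X^{A;j}}\right\vert _{\rho \text{-var};\left[ s,t\right] ^{2}}\leq \omega \left( \left[ s,t\right] ^{2}\right) ^{1/\rho }$. Lemma \ref{RegularityOfRK} only gives finiteness of this quantity with a constant that degenerates as $\left\vert A^{c}\right\vert$ grows, and the remark after Lemma \ref{KL_uniform_2var} states explicitly that a uniform $\rho$-variation bound for conditioned covariances is open for $\rho \in \lbrack 1,2)$; only the $2$-variation bound of Lemma \ref{KL_uniform_2var} is uniform in $A$. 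The paper sidesteps this entirely by writing $\int_{s}^{t}f\,dX^{A,j}=\mathbb{E}\left( \int_{s}^{t}f\,dX^{j}\,|\,\mathcal{F}_{A}\right) $ and invoking $L^{2}$-contractivity of conditional expectation, so that the Young--Wiener estimate is applied with integrator $X^{j}$, whose covariance is the original $R$ and is controlled by $\omega$. Without this reduction your last display is unjustified.

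Second, your Step 3 claims $\left\vert f\right\vert _{\rho \text{-var}}\leq C\omega ^{1/\rho }$ uniformly in $A$ via ``a Parseval-type computation parallel to Lemma \ref{KL_uniform_2var}.'' That lemma is intrinsically an $\ell ^{2}$/positive-semidefiniteness argument and produces a $2$-variation bound; restricting attention to the special disjoint families $\left[ u_{i},u_{i+1}\right] \times \left[ s,u_{i}\right] $ does not change this --- you would still need $\sum_{i}\left\vert R_{X^{A^{c};i}}(\cdot )\right\vert ^{\rho }$ controlled with exponent $\rho <2$, and no argument is given (nor is one known; this is exactly the open issue in the remark). Fortunately the claim is also unnecessary: the paper only establishes $\left\vert f\right\vert _{2\text{-var};\left[ s,t\right] }^{2}\leq 2\left\vert R_{X^{A^{c};i}}\right\vert _{2\text{-var};\left[ s,t\right] ^{2}}^{2}\leq 2\omega \left( \left[ s,t\right] ^{2}\right) ^{2/\rho }$, using your rectangular-increment identity, super-additivity of the right-hand side, and Lemma \ref{KL_uniform_2var}; the Young--Wiener estimate then applies because $1/2+1/\rho >1$ for $\rho <2$, still yielding $\omega ^{2/\rho }\cdot \omega ^{1/\rho }=\omega ^{3/\rho }$. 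Replacing your $\rho$-variation claim by this $2$-variation bound and inserting the conditional-expectation reduction turns your sketch into the paper's proof.
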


\begin{proof}
From%
\begin{equation*}
\int_{s}^{t}R_{X^{A^{c};i}}\left( 
\begin{array}{c}
u \\ 
t%
\end{array}%
,%
\begin{array}{c}
s \\ 
u%
\end{array}%
\right) dX_{u}^{A,j}=\mathbb{E}\left( \left.
\int_{s}^{t}R_{X^{A^{c};i}}\left( 
\begin{array}{c}
u \\ 
t%
\end{array}%
,%
\begin{array}{c}
s \\ 
u%
\end{array}%
\right) dX_{u}^{j}\right\vert \mathcal{F}_{A}\right)
\end{equation*}%
it suffices to consider the integral with integrator $dX^{j}$. We define%
\begin{equation*}
f\left( u\right) :=R_{X^{A^{c};i}}\left( 
\begin{array}{c}
u \\ 
t%
\end{array}%
,%
\begin{array}{c}
s \\ 
u%
\end{array}%
\right) .
\end{equation*}%
and note that $f\left( s\right) =0$. It is easy to see that for $u<v$ in $%
\left[ s,t\right] $,%
\begin{equation*}
\left\vert f_{u,v}\right\vert ^{2}\leq \left\vert R_{X^{A^{c};i}}\right\vert
_{2\text{$-var$;}\left[ u,v\right] \times \left[ s,t\right] }^{2}+\left\vert
R_{X^{A^{c};i}}\right\vert _{2\text{$-var$;}\left[ s,t\right] \times \left[
u,v\right] }^{2}.
\end{equation*}%
Noting super-additivity of the right hand side in $\left[ u,v\right] $ and
using Lemma \ref{KL_uniform_2var},%
\begin{eqnarray*}
\left\vert f\right\vert _{2\text{$-var$;}\left[ s,t\right] }^{2} &\leq
&2\left\vert R_{X^{A^{c};i}}\right\vert _{2\text{$-var$;}\left[ s,t\right]
^{2}}^{2} \\
&\leq &2\left\vert R_{X^{i}}\right\vert _{2\text{$-var$;}\left[ s,t\right]
^{2}}^{2} \\
&\leq &2\omega \left( \left[ s,t\right] ^{2}\right) ^{2/\rho }.
\end{eqnarray*}%
Now, $f$ has finite $2$-variation and the covariance of the integrator $%
dX^{j}$ has finite $\rho $-variation, $\rho \in \lbrack 1,2)$ controlled by $%
\omega $. Thanks to $1/2+1/\rho >1$ we can conclude with the "Young-Wiener"
estimate \cite[Prop. 38]{friz-victoir-07-DEdrivenGaussian_I}.
\end{proof}

Putting the last two results together and using \cite[Proposition 24]%
{friz-victoir-07-DEdrivenGaussian_I}, we obtain the following theorem.

\begin{theorem}
\label{KHuniformModulus}For all $s<t$ in $\left[ 0,1\right] $ there exists $%
C=C\left( \rho \right) $ such that 
\begin{equation*}
\sup_{A\subset \mathbb{N}\text{,}\min \left\{ \left\vert A\right\vert
,\left\vert A^{C}\right\vert \right\} <\infty }\mathbb{E}\left( \left\Vert 
\mathbf{X}_{s,t}^{A}\right\Vert ^{2}\right) \leq C\omega \left( \left[ s,t%
\right] ^{2}\right) ^{\frac{1}{\rho }}.
\end{equation*}%
For $p>2\rho $ and $\omega \left( \left[ 0,1\right] ^{2}\right) \leq K$
there exists $\eta =\eta \left( p,\rho ,K\right) >0$ such that%
\begin{equation*}
\sup_{A\subset \mathbb{N}\text{,}\min \left\{ \left\vert A\right\vert
,\left\vert A^{C}\right\vert \right\} <\infty }\mathbb{E}\left( \exp \eta
\left\Vert \mathbf{X}^{A}\right\Vert _{p\text{$-var$;}\left[ 0,1\right]
}^{2}\right) <\infty .
\end{equation*}%
If $\omega \left( \left[ s,t\right] ^{2}\right) \leq K\left\vert
t-s\right\vert $ for all $s<t$ in $\left[ 0,1\right] $ we can replace $%
\left\Vert \mathbf{X}^{A}\right\Vert _{p\text{$-var$;}\left[ 0,1\right] }$
by $\left\Vert \mathbf{X}^{A}\right\Vert _{1/p-H\ddot{o}l\text{;}\left[ 0,1%
\right] }$.
\end{theorem}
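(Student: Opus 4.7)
The plan is to assemble three ingredients already in hand: the martingale decomposition of Lemma \ref{martingaleLEvel3}, the Young--Wiener bound from the preceding proposition, and the rough-path Fernique-type statement of \cite[Prop.~24]{friz-victoir-07-DEdrivenGaussian_I}. The core task is the $L^{2}$-increment estimate
\begin{equation*}
\mathbb{E}\bigl(\Vert\mathbf{X}^{A}_{s,t}\Vert^{2}\bigr)\le C\omega([s,t]^{2})^{1/\rho},
\end{equation*}
from which the other two assertions follow by a direct appeal to the companion paper and (in the H\"older case) a Garsia--Rodemich--Rumsey-type continuity argument.

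For the $L^{2}$-estimate, I would exploit the equivalence of homogeneous norms $\Vert\mathbf{x}\Vert^{2}\le K_{3}^{2}\sum_{i=1}^{3}|\pi_{i}(\mathbf{x})|^{2/i}$ and treat each level separately. Level~$1$ is immediate since $X^{A}_{s,t}$ is a conditional expectation of $X_{s,t}$. Level~$2$ is essentially a martingale estimate: projecting Lemma \ref{martingaleLEvel3} to $g_{2}\left(\mathbb{R}^{d}\right)$ gives $\mathbb{E}(\mathbf{X}^{i,j}_{s,t}|\mathcal{F}_{A})=\mathbf{X}^{A;i,j}_{s,t}$, so conditional Jensen combined with the known bound $\mathbb{E}|\mathbf{X}^{i,j}_{s,t}|^{2}\le C\omega([s,t]^{2})^{2/\rho}$ from \cite{friz-victoir-07-DEdrivenGaussian_I} does the job.

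The real work is at level~$3$. Using Lemma \ref{martingaleLEvel3}, write
\begin{equation*}
\ln\mathbf{X}^{A}_{s,t}=\mathbb{E}\bigl(\ln\mathbf{X}_{s,t}\big|\mathcal{F}_{A}\bigr)-\mathrm{(algebraic\ correction)}+\mathrm{(Young\mbox{-}Wiener\ correction)},
\end{equation*}
both corrections pointing in the $[e_{i},[e_{i},e_{j}]]$-directions. The conditional expectation is dominated in $L^{2}$ by $\Vert\pi_{3}(\ln\mathbf{X}_{s,t})\Vert_{L^{2}}$, already known to be of order $\omega([s,t]^{2})^{3/(2\rho)}$ for the unconditioned lift. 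The algebraic term $X^{A;j}_{s,t}R_{X^{A^{c};i}}(s,t;s,t)$ is handled by Cauchy--Schwarz: the deterministic factor satisfies $|R_{X^{A^{c};i}}(s,t;s,t)|=\mathbb{E}(|X^{A^{c};i}_{s,t}|^{2})\le\omega([s,t]^{2})^{1/\rho}$ while $\mathbb{E}(|X^{A;j}_{s,t}|^{2})\le\omega([s,t]^{2})^{1/\rho}$, giving altogether a contribution of order $\omega^{3/\rho}$. The Young--Wiener correction is precisely what the preceding proposition controls at the required $\omega^{3/\rho}$ scale. Passing from $\ln\mathbf{X}^{A}_{s,t}$ to $\mathbf{X}^{A}_{s,t}$ costs only lower-order BCH terms involving levels~$1$ and~$2$, whose higher moments are controlled by hypercontractivity in the (fixed) Wiener chaos.

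For exponential integrability, the uniform $L^{2}$-modulus estimate feeds directly into \cite[Prop.~24]{friz-victoir-07-DEdrivenGaussian_I}: it is tailor-made to upgrade a Gaussian-type control on two-parameter increments into Fernique-style integrability of the $p$-variation norm for any $p>2\rho$, and nothing in its hypothesis breaks under taking the supremum over admissible $A$. In the linear-control case $\omega([s,t]^{2})\le K|t-s|$ the same $L^{2}$-bound combined with a Kolmogorov/GRR embedding in Besov scale upgrades $p$-variation to $1/p$-H\"older regularity, and the exponential tail transfers across this embedding. The main obstacle, as the remark following Lemma \ref{martingaleLEvel3} already flags, is the level-$3$ Young--Wiener correction: it is precisely the reason a naive conditional-expectation/martingale argument breaks at step three, and the preceding proposition is engineered to absorb it with the correct $\omega^{3/\rho}$ homogeneity.
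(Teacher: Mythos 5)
Your proposal is correct and follows essentially the same route as the paper, whose proof is simply to combine Lemma \ref{martingaleLEvel3} and the preceding Young--Wiener proposition and then invoke \cite[Prop.~24]{friz-victoir-07-DEdrivenGaussian_I}; your level-by-level accounting (conditional Jensen at levels $1$--$2$ and for $\mathbb{E}(\ln\mathbf{X}_{s,t}|\mathcal{F}_{A})$, Cauchy--Schwarz for the algebraic correction, the proposition for the integral correction, each at the homogeneity $\omega^{3/\rho}$) is exactly the intended filling-in of that one-line argument.
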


We now discuss convergence results.

\begin{theorem}
\label{KHmainConvergenceThm}Let $A_{n}=\left\{ 1,...,n\right\} $. For any $%
p>2\rho $ and $q\in \lbrack 1,\infty ),$%
\begin{eqnarray}
d_{p\text{$-var;\left[ 0,1\right] $}}\left( \mathbf{X}^{A_{n}},\mathbf{X}%
\right) &\rightarrow &0\text{ in }L^{q}\left( \Omega \right) \text{ as }%
n\rightarrow \infty ,  \label{KLpvarConvergence} \\
\left\Vert \mathbf{X}^{A_{n}^{c}}\right\Vert _{p\text{$-var;\left[ 0,1\right]
$}} &\rightarrow &0\text{ in }L^{q}\left( \Omega \right) \text{ as }%
n\rightarrow \infty .  \label{KLpvarConvergenceBack}
\end{eqnarray}%
If $\omega $ is H\"{o}lder dominated, i.e. $\sup_{0\leq s<t\leq 1}\omega
\left( \left[ s,t\right] ^{2}\right) /\left\vert t-s\right\vert
^{1/p}<\infty $, then 
\begin{eqnarray}
d_{1/p\text{-$H\ddot{o}l;\left[ 0,1\right] $}}\left( \mathbf{X}^{A_{n}},%
\mathbf{X}\right) &\rightarrow &0\text{ in }L^{q}\left( \Omega \right) \text{
as }n\rightarrow \infty ,  \label{KLHolConvergence} \\
\left\Vert \mathbf{X}^{A_{n}^{c}}\right\Vert _{1/p\text{-$H\ddot{o}l;\left[
0,1\right] $}} &\rightarrow &0\text{ in }L^{q}\left( \Omega \right) \text{
as }n\rightarrow \infty .  \label{KLHolConvergenceBack}
\end{eqnarray}
\end{theorem}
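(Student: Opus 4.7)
The plan is to combine pointwise $L^2$-convergence (via the martingale decomposition of Lemma \ref{martingaleLEvel3}), the uniform moduli of Theorem \ref{KHuniformModulus}, and a standard interpolation between $d_0$ and $d_{p\text{-var}}$. Fix $p > 2\rho$ and pick $p_0 \in (2\rho, p)$. From Theorem \ref{KHuniformModulus} the exponential integrability gives uniform $L^{q'}$-bounds on $\|\mathbf{X}^{A_n}\|_{p_0\text{-var}}$ for every $q'$. The elementary interpolation
\[
d_{p\text{-var}}(\mathbf{x},\mathbf{y}) \leq d_0(\mathbf{x},\mathbf{y})^{1-p_0/p}\bigl(\|\mathbf{x}\|_{p_0\text{-var}} + \|\mathbf{y}\|_{p_0\text{-var}}\bigr)^{p_0/p},
\]
combined with H\"older's inequality, reduces \eqref{KLpvarConvergence} to the claim $\|d_0(\mathbf{X}^{A_n}, \mathbf{X})\|_{L^q} \to 0$ for every $q$.

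To obtain the latter I first establish $d(\mathbf{X}^{A_n}_{s,t}, \mathbf{X}_{s,t}) \to 0$ in $L^2$ for each fixed $s < t$. Lemma \ref{martingaleLEvel3} writes $\log \mathbf{X}^{A_n}_{s,t} = \mathbb{E}[\log \mathbf{X}_{s,t} \mid \mathcal{F}_{A_n}] - C_n$, with $C_n$ an explicit correction driven by $R_{X^{A_n^c}}$. The conditional expectation converges in $L^2$ to $\log \mathbf{X}_{s,t}$ by the martingale convergence theorem (using that $\log \mathbf{X}_{s,t}$ lies in a finite Wiener chaos, hence in $L^2$, and $\bigvee_n \mathcal{F}_{A_n} = \sigma(X)$). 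The two algebraic pieces of $C_n$ vanish because $R_{X^{A_n^c,i}}(s,t;s,t) = \mathbb{E}|X^{A_n^c,i}_{s,t}|^2 \to 0$ while $X^{A_n,j}_{s,t}$ stays bounded in $L^2$.

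The crux is the Young--Wiener integral $I_n = \int_s^t f_n(u)\,dX^{A_n,j}_u$ with $f_n(u) = R_{X^{A_n^c,i}}(u,t;s,u)$. By the conditioning trick used in the preceding proposition, $\|I_n\|_{L^2} \leq \|\int_s^t f_n\,dX^j\|_{L^2}$, and the Young--Wiener estimate for any $q \in (2, \rho/(\rho-1))$ (nonempty since $\rho < 2$) gives $\|I_n\|_{L^2} \leq C|f_n|_{q\text{-var};[s,t]}\,|R|_{\rho\text{-var};[s,t]^2}^{1/2}$. Dini's theorem applied to the monotonically decreasing family of continuous functions $(a,b) \mapsto \mathbb{E}|X^{A_n^c,i}_{a,b}|^2$ on the compact $\{s \leq a \leq b \leq t\}$, whose continuous limit is $0$, yields $|f_n|_\infty \to 0$ uniformly. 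Meanwhile Lemma \ref{KL_uniform_2var} provides a uniform-in-$n$ bound on $|f_n|_{2\text{-var}}$, so the interpolation $|f_n|_{q\text{-var}} \leq |f_n|_\infty^{1-2/q}|f_n|_{2\text{-var}}^{2/q}$ forces $\|I_n\|_{L^2} \to 0$. Wiener-chaos equivalence of norms then upgrades pointwise $L^2$-convergence to pointwise $L^q$-convergence, and a Garsia--Rodemich--Rumsey argument, fed the uniform modulus of Theorem \ref{KHuniformModulus}, converts this into $\|d_0(\mathbf{X}^{A_n}, \mathbf{X})\|_{L^q} \to 0$.

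The statement \eqref{KLpvarConvergenceBack} is obtained by the same scheme with $\mathbf{X}^{A_n^c}$ in place of $\mathbf{X}^{A_n}$ and the constant path $0$ as target (noting $|(A_n^c)^c| = n < \infty$ so Theorem \ref{KHuniformModulus} applies). The H\"older conclusions \eqref{KLHolConvergence}--\eqref{KLHolConvergenceBack} follow identically from the H\"older modulus variant of Theorem \ref{KHuniformModulus} and the H\"older analogue of the interpolation above. The principal obstacle throughout, flagged by the authors just after Lemma \ref{martingaleLEvel3}, is the third-level correction integral $I_n$: it is not a martingale increment, so its vanishing genuinely requires the quantitative combination of Young--Wiener bounds, Dini's theorem, and the uniform $2$-variation estimate of Lemma \ref{KL_uniform_2var}.
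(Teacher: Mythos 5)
Your treatment of (\ref{KLpvarConvergence}) and (\ref{KLHolConvergence}) follows the paper's route essentially verbatim: reduce, via the uniform moduli of Theorem \ref{KHuniformModulus} and interpolation, to pointwise convergence in probability at fixed times; dispose of levels $1,2$ by (forward) martingale convergence; and kill the level-$3$ correction of Lemma \ref{martingaleLEvel3} by combining the conditioning trick, the uniform $2$-variation bound of Lemma \ref{KL_uniform_2var}, uniform convergence of $R_{X^{A_n^c;i}}$ to zero (your Dini argument makes explicit what the paper leaves implicit), interpolation into $q$-variation with $2<q<\rho/(\rho-1)$, and the Young--Wiener estimate. This is exactly the paper's condition $(2+\varepsilon)^{-1}+\rho^{-1}>1$, and this part of your argument is correct.

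The gap is in your last paragraph, where you dismiss (\ref{KLpvarConvergenceBack}) and (\ref{KLHolConvergenceBack}) as ``the same scheme with the constant path $0$ as target.'' It is not the same scheme. For $\mathbf{X}^{A_n}$ the key input was $\mathbb{E}\left( \ln \mathbf{X}_{s,t}|\mathcal{F}_{A_n}\right) \rightarrow \ln \mathbf{X}_{s,t}$ by forward martingale convergence along the increasing filtration $\mathcal{F}_{A_n}$. For $\mathbf{X}^{A_n^{c}}$ the relevant $\sigma$-fields \emph{decrease}, so one needs backward martingale convergence, $\mathbb{E}\left( \ln \mathbf{X}_{s,t}|\mathcal{F}_{A_n^{c}}\right) \rightarrow \mathbb{E}\left( \ln \mathbf{X}_{s,t}|\cap_{k}\mathcal{F}_{A_k^{c}}\right)$, Kolmogorov's $0$--$1$ law to identify the limit as the deterministic constant $\mathbb{E}\left( \ln \mathbf{X}_{s,t}\right)$, and then a separate verification that this constant vanishes. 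That last point is not automatic at level $3$: $\pi_{3}\left( \ln \mathbf{X}_{s,t}\right)$ involves terms such as $\mathbf{X}^{i,i,j}$, $\left( X_{s,t}^{i}\right)^{2}X_{s,t}^{j}$ and $X_{s,t}^{i}\mathbf{X}_{s,t}^{i,j}$, which do not lie in a single homogeneous Wiener chaos, and the paper checks term by term (third-chaos membership for $\mathbf{X}^{i,j,k}$, an approximation-plus-Fubini argument for $\mathbf{X}^{i,i,j}$, chaos orthogonality for $X^{i}\mathbf{X}^{i,j}$) that each expectation is zero. Without this, your argument only shows that $\ln \mathbf{X}_{s,t}^{A_n^{c}}$ converges to the constant $\mathbb{E}\left( \ln \mathbf{X}_{s,t}\right)$, not to $0$. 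The gap is fillable, but the idea is genuinely missing from your write-up.
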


\begin{proof}
\underline{Ad (\ref{KLpvarConvergence}), (\ref{KLHolConvergence}):} From 
\cite[appendix I]{friz-victoir-07-DEdrivenGaussian_I} and Theorem \ref%
{KHuniformModulus}, it is enough to prove that for any fixed $t\in \left[ 0,1%
\right] $, 
\begin{equation*}
d\left( \mathbf{X}_{t}^{A_{n}},\mathbf{X}_{t}\right) \rightarrow 0
\end{equation*}%
in $L^{q}$ or, in fact, in probability (thanks to the uniform $L^{q}$-bounds
for all $q<\infty $ in Theorem \ref{KHuniformModulus}).The topology induced
by $d$ on $G^{3}\left( \mathbb{R}^{d}\right) $ is consistent with the
manifold topology $G^{3}\left( \mathbb{R}^{d}\right) \subset T^{3}\left( 
\mathbb{R}^{d}\right) $ and in particular with the topology induced from the
Euclidean structure on $g^{3}\left( \mathbb{R}^{d}\right) =\ln \left(
G^{3}\left( \mathbb{R}^{d}\right) \right) $, seen as global chart for $%
G^{3}\left( \mathbb{R}^{d}\right) $. It is therefore enough to show for $%
N=1,2,3$ we have pointwise convergence,%
\begin{equation*}
\pi _{N}\left( \ln \left( \mathbf{X}_{t}^{A_{n}}\right) -\ln \left( \mathbf{X%
}_{t}\right) \right) \rightarrow 0\text{ in probability.}
\end{equation*}%
By martingale convergence, this is obvious for $N=1,2$ but for $N=3$ we have
to handle the correction which we identified in Lemma \ref{martingaleLEvel3},%
\begin{equation*}
\left( \frac{1}{12}X_{s,t}^{A;j}R_{X^{A^{c};i}}\left( 
\begin{array}{c}
s \\ 
t%
\end{array}%
,%
\begin{array}{c}
s \\ 
t%
\end{array}%
\right) -\frac{1}{2}\int_{s}^{t}R_{X^{A^{c};i}}\left( 
\begin{array}{c}
u \\ 
t%
\end{array}%
,%
\begin{array}{c}
s \\ 
u%
\end{array}%
\right) dX_{u}^{A,j}\right) \left[ e_{i},\left[ e_{i},e_{j}\right] \right] .
\end{equation*}%
All we need is pointwise convergence in probability to zero of this
expression. Clearly, $X^{A_{n}^{c}}=\mathbb{E}\left[ X|\mathcal{F}_{\left\{
n+1,n+2,...\right\} }\right] \rightarrow 0$ a.s. and in all $L^{q}$ as $%
n\rightarrow \infty $. It follows that $R_{X^{A_{n}^{c};i}}\rightarrow 0$
pointwise which takes care of the first summand. The second term is a
Young-Wiener integral in the sense of \cite[Proposition 38]%
{friz-victoir-07-DEdrivenGaussian_I}. From our uniform estimates and
interpolation, $R_{X^{A_{n}^{c};i}}\rightarrow 0$ in $\left( 2+\varepsilon
\right) $-variation. Using notation from the last proposition,%
\begin{equation*}
\int_{s}^{t}f\left( u\right) dX_{u}^{A_{n},j}=\mathbb{E}\left( \left.
\int_{s}^{t}f\left( u\right) dX_{u}^{j}\right\vert \mathcal{F}%
_{A_{n}}\right) ,
\end{equation*}%
and it is enough to show that $\int_{s}^{t}f\left( u\right)
dX_{u}^{j}\rightarrow 0$ in $L^{2}$. Now,%
\begin{equation*}
\left\vert f\right\vert _{\left( 2+\varepsilon \right) \text{$-var$;}\left[
s,t\right] }^{2+\varepsilon }\leq C\left\vert R_{X^{A_{n}^{c};i}}\right\vert
_{\left( 2+\varepsilon \right) \text{$-var$;}\left[ s,t\right]
^{2}}^{2+\varepsilon }\rightarrow 0
\end{equation*}%
and using the Young-Wiener estimate for $\varepsilon $ chosen small enough
(namely such that $\left( 2+\varepsilon \right) ^{-1}+\rho ^{-1}>1$ which is
always possible since $\rho \in \lbrack 1,2)$) we obtain the required
convergence in $L^{2}$ and hence in probability as required.\newline
\underline{Ad (\ref{KLpvarConvergenceBack}), (\ref{KLHolConvergenceBack}):}
As in the first part of the proof, it is enough to show that, for fixed $%
t\in \left[ 0,1\right] $, $\mathbf{X}_{t}^{A_{n}^{c}}\rightarrow 0$ in
probability or, equivalently,%
\begin{equation*}
\ln \left( \mathbf{X}_{t}^{A_{n}^{c}}\right) \rightarrow 0\text{ in
probability.}
\end{equation*}%
We first claim that $\mathbb{E}\left( \ln \left( \mathbf{X}_{t}\right) |%
\mathcal{F}_{A_{n}^{c}}\right) \rightarrow 0$. Indeed, by backward
martingale convergence and Kolmogorov's 0-1 law,%
\begin{eqnarray*}
\mathbb{E}\left( \ln \left( \mathbf{X}_{t}\right) |\mathcal{F}%
_{A_{n}^{c}}\right) &\rightarrow &\mathbb{E}\left( \ln \left( \mathbf{X}%
_{t}\right) |\cap _{k}\mathcal{F}_{A_{k}^{c}}\right) \text{ a.s. and in all }%
L^{q} \\
\overset{\text{a.s.}}{=}\mathbb{E}\left( \ln \mathbf{X}_{t}\right) &=&0.%
\text{ }
\end{eqnarray*}
Let us detail why the ($g^{3}\left( \mathbb{R}^{d}\right) $-valued)
expectation of $\ln \mathbf{X}_{t}$ is indeed $0$. The only interesting case
is projection to the level $N=3$. By the expansion of $\ln \mathbf{X}_{t}$
given in \cite[Prop. 58]{friz-victoir-07-DEdrivenGaussian_I} we know that $%
\pi _{3}\left( \ln \mathbf{X}_{t}\right) $ involves precisely terms of form $%
\mathbf{X}_{t}^{i,j,k},\mathbf{X}_{t}^{i,i,j},\left( \mathbf{X}%
_{t}^{i}\right) ^{2}\mathbf{X}_{t}^{j}$ and $\mathbf{X}_{t}^{i}\mathbf{X}%
_{t}^{i,j}$ (with disjoint indices $i,j,k\in \left\{ 1,\dots d\right\} $).
Then $\mathbb{E}\mathbf{X}_{t}^{i,j,k}=0$ since $\mathbf{X}_{t}^{i,j,k}$ is
an element of the third homogenous Wiener-It\^{o} chaos. $\mathbb{E}\mathbf{X%
}_{t}^{i,i,j}=0$ since an approximation argument reduces this to the case of
nice sample paths in which case the assertion is clear using Fubini and $%
\mathbb{E}\mathbf{X}^{j}=\mathbb{E}X^{j}=0$). Similar (but easier) for $%
\left( \mathbf{X}_{t}^{i}\right) ^{2}\mathbf{X}_{t}^{j}$. At last $\mathbb{E}%
\left( \mathbf{X}_{t}^{i}\mathbf{X}_{t}^{i,j}\right) =0$ by orthogonality of
the first and second Wiener-It\^{o} chaos.) The proof will be finished if we
can handle the difference between $\ln \left( \mathbf{X}_{t}^{A_{n}^{c}}%
\right) $ and $\mathbb{E}\left( \ln \left( \mathbf{X}_{t}\right) |\mathcal{F}%
_{A_{n}^{c}}\right) $. But using Lemma \ref{martingaleLEvel3}, this is done
in the same way as in the first part of the proof.
\end{proof}

\section{Support Theorem for the Law of $\mathbf{X}$}

We recall the standing assumptions. $X=(X^{i}:i=1,...,d)$ is a centered
continuous Gaussian process on $\left[ 0,1\right] $, with independent
components and finite covariance of finite $\rho \in \lbrack 1,2)$%
-variation, dominated by some 2D control $\omega .$ From \cite[Theorem 35]%
{friz-victoir-07-DEdrivenGaussian_I} we know that, for $p\in \left( 2\rho
,4\right) $, $X$ lifts to a (random) geometric $p$-rough path $\mathbf{X}$
with a.e. sample path in $C_{0}^{0,p\text{$-var$}}\left( \left[ 0,1\right]
,G^{3}\left( \mathbb{R}^{d}\right) \right) $. If $\omega $ is H\"{o}lder
dominated we have sample paths in $C_{0}^{0,1/p-H\ddot{o}l}\left( \left[ 0,1%
\right] ,G^{3}\left( \mathbb{R}^{d}\right) \right) .$

\begin{theorem}
Let $\mathbb{P}_{\ast }\mathbf{X}$ denote the law of $\mathbf{X}$, a Borel
measure on the Polish space $C_{0}^{0,p\text{$-var$}}\left( \left[ 0,1\right]
,G^{3}\left( \mathbb{R}^{d}\right) \right) $. Then%
\begin{equation*}
\text{supp}\left[ \mathbb{P}_{\ast }\mathbf{X}\right] =\text{ }\overline{%
S_{3}\left( \mathcal{H}\right) }
\end{equation*}%
where support and closure are with respect to $p$-variation topology. If $%
\omega $ is H\"{o}lder dominated, $\omega \left( \left[ s,t\right]
^{2}\right) \leq K\left\vert t-s\right\vert $ for some constant $K$, we can
use $1/p$-H\"{o}lder topology instead of $p$-variation topology.
\end{theorem}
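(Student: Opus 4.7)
The plan is to prove both inclusions separately, with the aid of Theorem \ref{KHmainConvergenceThm} and an abstract support criterion for approximately continuous Wiener functionals along the lines indicated in the introduction.

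For the easy inclusion $\mathrm{supp}[\mathbb{P}_{\ast}\mathbf{X}]\subseteq\overline{S_{3}(\mathcal{H})}$, I would observe that the Karhunen--Lo\`eve approximation $X^{A_n}(\omega)=\sum_{k=1}^{n}Z_{k}(\omega)h^{k}$ is, for almost every $\omega$, a finite linear combination of Cameron--Martin basis elements, hence itself lies in $\mathcal{H}$, so that $\mathbf{X}^{A_n}(\omega)=S_{3}(X^{A_n}(\omega))\in S_{3}(\mathcal{H})$ a.s. The $L^q$-convergence $d_{p\text{-var}}(\mathbf{X}^{A_n},\mathbf{X})\to0$ of Theorem \ref{KHmainConvergenceThm} then yields, along a subsequence, a.s.\ convergence, and therefore $\mathbf{X}\in\overline{S_{3}(\mathcal{H})}$ almost surely; since the support is the smallest closed set of full measure, this gives the inclusion.

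For the reverse inclusion $\overline{S_{3}(\mathcal{H})}\subseteq\mathrm{supp}[\mathbb{P}_{\ast}\mathbf{X}]$, I would fix $h=\sum_{k}a_{k}h^{k}\in\mathcal{H}$ and $\varepsilon>0$ and aim to show $\mathbb{P}(d_{p\text{-var}}(\mathbf{X},S_{3}(h))<\varepsilon)>0$. Setting $h^{n}=\sum_{k\leq n}a_{k}h^{k}$, the embedding $\mathcal{H}\hookrightarrow C^{\rho\text{-var}}$ from Paper I, together with Young-continuity of $S_{3}$, gives $S_{3}(h^{n})\to S_{3}(h)$ in $d_{p\text{-var}}$, so it suffices to produce positive probability of being $\varepsilon$-close to $S_{3}(h^{n})$ for some large $n$. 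For this I would combine Cameron--Martin translation with the finite-dimensional structure of the approximations: the shifted law $\mathbb{P}^{h^n}$ is absolutely continuous w.r.t.\ $\mathbb{P}$ with density in every $L^q$, and at the approximating level one has the explicit identity
\[
\mathbf{X}^{A_m}(\omega+h^n)=S_{3}(X^{A_m}(\omega)+h^n),\qquad m\geq n,
\]
which by Young-continuity of $S_{3}$ is within $\varepsilon/3$ of $S_{3}(h^{n})$ on the event $\{\|X^{A_m}(\omega)\|_{\rho\text{-var}}<r\}$, an event of positive $\mathbb{P}$-probability by continuity of $(z_{1},\ldots,z_{m})\mapsto\sum_{k\le m} z_{k}h^{k}$ in $\rho$-variation and full support of the standard Gaussian law on $\mathbb{R}^{m}$. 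The $L^q$-bound on the Girsanov density transfers $d_{p\text{-var}}(\mathbf{X}^{A_m},\mathbf{X})\to0$ from $\mathbb{P}$ to $\mathbb{P}^{h^n}$, and a standard $\mathbb{P}(A\cap B)\geq\mathbb{P}(A)+\mathbb{P}(B)-1$ bound, for $m\geq n$ chosen large enough, produces a set of positive probability on which $d_{p\text{-var}}(\mathbf{X}(\omega+h^{n}),S_{3}(h^{n}))<\varepsilon$; Cameron--Martin absolute continuity of $\mathbb{P}^{h^n}$ w.r.t.\ $\mathbb{P}$ then yields $\mathbb{P}(d_{p\text{-var}}(\mathbf{X},S_{3}(h^{n}))<\varepsilon)>0$.

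The main obstacle is that property (iii) from the introduction --- identifying $\mathbf{X}(\omega+h)$ with the deterministic rough-path translate $T_{h}\mathbf{X}(\omega)$ --- is not available in the generality of the Gaussian setting, which blocks a direct Feyel--de la Pradelle shift-and-small-ball argument on $\mathbf{X}$ itself. The workaround, which is the essence of the abstract support theorem for approximately continuous Wiener functionals mentioned in the introduction, is to operate at the finite-dimensional Karhunen--Lo\`eve level (where the shift formula is literal) and then to transfer to $\mathbf{X}$ via Theorem \ref{KHmainConvergenceThm} together with $L^q$-control of the Girsanov density. The H\"older statement follows verbatim, substituting $d_{1/p\text{-H\"ol}}$ for $d_{p\text{-var}}$ and invoking the corresponding H\"older conclusions of Theorem \ref{KHmainConvergenceThm}.
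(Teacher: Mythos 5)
Your first inclusion, $\mathrm{supp}[\mathbb{P}_{\ast }\mathbf{X}]\subseteq \overline{S_{3}\left( \mathcal{H}\right) }$, is correct and is exactly the paper's argument. The second inclusion, however, has a genuine gap at the step where you intersect the two events. You need, for some $m\geq n$, that $\mathbb{P}\left( E_{m}\right) +\mathbb{P}\left( F_{m}\right) >1$, where $E_{m}=\left\{ d_{p\text{$-var$}}\left( \mathbf{X}^{A_{m}}(\omega +h^{n}),\mathbf{X}(\omega +h^{n})\right) <\varepsilon /3\right\} $ and $F_{m}=\left\{ \left\Vert X^{A_{m}}(\omega )\right\Vert _{\rho \text{$-var$}}<r\right\} $. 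While $\mathbb{P}\left( E_{m}\right) \rightarrow 1$, the probability $\mathbb{P}\left( F_{m}\right) $ tends to $0$ as $m\rightarrow \infty $ for every fixed $r$: in the regime of interest the sample paths of $X$ have a.s. infinite $\rho $-variation (e.g. fractional Brownian motion with $\rho =1/(2H)$, whose paths only have finite $q$-variation for $q>2\rho $), and by lower semicontinuity of the variation norm under uniform convergence $\liminf_{m}\left\Vert X^{A_{m}}\right\Vert _{\rho \text{$-var$}}=\infty $ a.s. So you are comparing two sequences both tending to their respective degenerate limits, and the bound $\mathbb{P}\left( E_{m}\cap F_{m}\right) \geq \mathbb{P}\left( E_{m}\right) +\mathbb{P}\left( F_{m}\right) -1$ cannot be made positive without quantitative rates (a small-ball estimate for $\left\Vert X^{A_{m}}\right\Vert _{\rho \text{$-var$}}$ against a rate in Theorem \ref{KHmainConvergenceThm}), neither of which you have. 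Note also that you cannot relax $F_{m}$ to a statement about $\left\Vert \mathbf{X}^{A_{m}}\right\Vert _{p\text{$-var$}}$ and then apply the translation operator $T_{h^{n}}$, because $1/p+1/\rho >1$ fails precisely when $\rho \geq 3/2$ --- this is the obstruction the paper's Remark \ref{RemarkAKSetc} is about, and your finite-dimensional detour reintroduces it through the back door.

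The paper's proof avoids this entirely by shifting not by the deterministic $h^{n}$ but by the \emph{random} partial sum $g_{n}(x)=X^{\left\{ 1,\dots ,n\right\} }(x)$. Then one has the exact algebraic identity $X^{\left\{ 1,\dots ,m\right\} }\left( x-g_{n}(x)\right) =X^{\left\{ n+1,\dots ,m\right\} }(x)$ for \emph{all} $x$ and all $m\geq n$, so no smallness event is needed at all: letting $m\rightarrow \infty $ gives $\hat{\varphi}\left( x-g_{n}(x)\right) =\varphi \left( X^{\left\{ n+1,\dots \right\} }(x)\right) $ a.s., and this tends to $\varphi \left( 0\right) $ in probability as $n\rightarrow \infty $ by the backward convergence (\ref{KLpvarConvergenceBack}) --- a statement your proposal never invokes, although it is the designated substitute for your small-ball step. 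Cameron--Martin quasi-invariance then places $\hat{\varphi}\left( x-g_{n}(x)\right) $, hence its limit point $\varphi \left( 0\right) $, in the support, and a second application of Cameron--Martin upgrades this to all of $\varphi \left( \mathcal{H}\right) $. This is the content of Proposition \ref{AbstractSupportCriterion}; to repair your argument you should replace the deterministic shift by $h^{n}$ with the random shift by $X^{A_{n}}(\omega )$ and target $\exp (0)$ first rather than $S_{3}(h^{n})$ directly.
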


\begin{remark}
Note that $S_{3}\left( \mathcal{H}\right) $ is canoncially defined by
iterated Young intgeration. Indeed, any $h\in \mathcal{H}$ has finite $\rho $%
-variation (\cite[Proposition 16]{friz-victoir-07-DEdrivenGaussian_I} and,
under the standing assumption of $\rho \in \lbrack 1,2)$, lifts to a $%
G^{3}\left( \mathbb{R}^{d}\right) $-valued paths (of finite $\rho $%
-variation) by iterated Young integration.
\end{remark}

\begin{proof}
Applying theorem \ref{KHmainConvergenceThm} with $\mathbf{X}^{\left\{
n,n+1,\dots \right\} }$ instead of $\mathbf{X}=\mathbf{X}^{\left\{
1,2,3,\dots \right\} }$ readily shows that for all $n\in \left\{ 1,2,3,\dots
\right\} $,%
\begin{equation*}
d_{p\text{$-var;\left[ 0,1\right] $}}\left( \mathbf{X}^{\left\{ n,n+1,\dots
,m\right\} },\mathbf{X}^{\left\{ n,n+1,\dots ,\right\} }\right) \rightarrow
_{m\rightarrow \infty }0\text{ in probability}
\end{equation*}%
and%
\begin{equation*}
\left\Vert \mathbf{X}^{\left\{ m+1,m+2,\dots \right\} }\right\Vert _{p\text{$%
-var;\left[ 0,1\right] $}}\rightarrow _{m\rightarrow \infty }0.
\end{equation*}%
It is also clear\footnote{%
After all, $\mathbf{X}\left( \omega \right) $ was defined as the the limit
in probability of piecewise linear approximations.} that $\omega \mapsto 
\mathbf{X}\left( \omega \right) $ restricted to $\mathcal{H}$ coincides with 
$S_{3}\left( \cdot \right) $ and is continuous thanks to $\mathcal{H}%
\hookrightarrow C^{\rho \text{-var}}$ and basic continuity properties of
Young integration.We can then conclude with a support description for
abstract Wiener functionals give in the appendix (proposition \ref%
{AbstractSupportCriterion}).
\end{proof}

\begin{remark}
\label{RemarkAKSetc}The idea of proposition \ref{AbstractSupportCriterion}
is to find at least one $\omega \in C\left( \left[ 0,1\right] ,\mathbb{R}%
^{d}\right) $ such that $\mathbf{X}\left( \omega \right) \in $ supp$\left[ 
\mathbb{P}_{\ast }\mathbf{X}\right] $ and such that there exists a sequence $%
\left( g_{n}\right) \subset \mathcal{H}$ such that $\mathbf{X}\left( \omega
-g_{n}\right) $ converges with respect to $d_{p-var}$ to $\mathbf{X}\left(
0\right) \equiv \exp \left( 0\right) \in G^{3}\left( \mathbb{R}^{d}\right) $%
. If elements of $\mathcal{H}$ (or at least some ONB $\left( h_{n}\right)
\subset \mathcal{H}$) have good enough regularity, namely finite $q$%
-variation with $1/p+1/q>1$, then this is relatively easy: One can take%
\begin{equation*}
g_{n}\left( \cdot ,\omega \right) =\sum_{i=1}^{n}\xi \left( h_{k}\right)
|_{\omega }h_{k}\left( \cdot \right) \in C^{q-var};
\end{equation*}%
recalling that $\mathbf{X}\left( \omega \right) $ was constructed as limit
(in probability) of piecewise linear approximations\footnote{%
On the null-set on which piecewise linear approximations do not converge, $%
\mathbf{X}\left( \omega \right) $ is defined as an arbitrary constant.} it
follows by basic properties of the rough path translation operator (using
crucially $1/p+1/q>1$) that for almost every $\omega $%
\begin{eqnarray*}
\mathbf{X}\left( \omega -g_{n}\right) &=&T_{-g_{n}}\mathbf{X}\left( \omega
\right) =\lim_{m\rightarrow \infty }T_{-g_{n}}\mathbf{X}^{\left\{ 1,\dots
,m\right\} }\left( \omega \right) \\
&=&\lim_{m\rightarrow \infty }\mathbf{X}^{\left\{ n+1,\dots ,m\right\}
}\left( \omega \right) \\
&=&\mathbf{X}^{\left\{ n+1,n+2,\dots \right\} }\left( \omega \right)
\end{eqnarray*}%
and by (\ref{KLpvarConvergenceBack}) this converges indeed to $\mathbf{X}%
\left( 0\right) \equiv \exp \left( 0\right) \in G^{3}\left( \mathbb{R}%
^{d}\right) $. This argument (construction of a nice ONB in $\mathcal{H}$)
was used by \cite{feyel-pradelle-2006} in the context of fractional Brownian
motion, using the particular structure of its Volterra kernel. (In fact, as
long as Hurst parameter $H>1/4$, all Cameron-Martin paths have sufficient
variational regularity, cf. \cite{friz-victoir-05-JFA}).\newline
Unfortunately, these arguments breaks down in the general setting when $\rho
\geq 3/2$, since $p=2\rho +\varepsilon $, that is, when dealing with general
Gaussian rough paths of $p$-variation regularity with $p\geq 3$. Our
proposition \ref{AbstractSupportCriterion} is based on a careful revision of
the above; it avoids any use of translation operators and is presented in
its its natural generality: the setting of abstract Wiener functionals.
Support descriptions of abstract Wiener functionals were studied by
Aida-Kusuoka-Stroock and it seems worthwhile to point out that their
abstract support theorem \cite[Cor 1.13]{MR1354161} applies to our situation
when $\rho <3/2$ but not beyond. More precisely, only for $\rho <3/2$ can we
translate $\mathbf{X}$ deterministically in Cameron-Martin directions: then $%
\omega \mapsto \mathbf{X}\left( \omega \right) $ is automatically $\mathcal{K%
}$-continuous in the sense of Aida, Kusuoka and Stroock and the support
description follows from their results. However, their notion of $\mathcal{K}
$-continuity does not cover the regime $\rho \in \lbrack 3/2,2)$; at least
not without addition information (such as the existence of an\ ONB of $%
\mathcal{H}$ contained in $C^{q\text{-var}}$ with $1/p+1/q>1$ ...).
\end{remark}

We recall that such a support description in rough path topology yields, as
consequence of Lyons' limit theorem \cite{lyons-98} and without further
work, a Stroock-Varadhan type support theorem for solutions of rough path
differential equations (RDEs) driven by the (random) geometric $p$-rough
path $\mathbf{X}$. For Brownian motion, this application was first carried
out by Ledoux et al. \cite{LeQiZh02}; they also conjecture the extension to
fractional Brownian motion which was then obtained for $H>1/3$ (i.e. without
third iterated integrals) in \cite{friz-victoir-04-Note} and \cite%
{feyel-pradelle-2006}. It is well-known that the Cameron-Martin space $%
\mathcal{H}^{H}$ of $d$-dimensional fractional Brownian motion started at $0$
contains $C_{0}^{1}\left( \left[ 0,1\right] ,\mathbb{R}^{d}\right) $. From 
\cite[Theorem 35]{friz-victoir-07-DEdrivenGaussian_I}, $\omega $ is H\"{o}%
lder dominated and we may work in $\alpha $-H\"{o}lder topology for $\alpha
\in \lbrack 0,H)$, in particular%
\begin{equation*}
\overline{S_{3}\left( \mathcal{H}^{H}\right) }=C_{0}^{0,\alpha -H\ddot{o}%
l}\left( \left[ 0,1\right] ,G^{3}\left( \mathbb{R}^{d}\right) \right) .
\end{equation*}%
A natural lift of fractional Brownian Motion (also known as \textit{enhanced
fractional Brownian motion}) exists for $H>1/4$ and we see that its support
is full, i.e. equals $C_{0}^{0,\alpha -H\ddot{o}l}\left( \left[ 0,1\right]
,G^{3}\left( \mathbb{R}^{d}\right) \right) $ for any $\alpha \in \lbrack
0,H) $ and $H>1/4$. Remark that the extension to $d$ independent fractional
Brownian motions with different Hurst indices greater than $1/4$ is
immediate.

\section{Appendix}

Let $\mu $ be a mean zero Gaussian measure on a real separable Banach space $%
E$. Following the standard references \cite{ledoux-1996} and \cite[Chapter 4]%
{DeuSt89}, there is an abstract Wiener space factorization of the form%
\begin{equation*}
E^{\ast }\overset{\mathcal{\iota }^{\ast }}{\longrightarrow }L^{2}\left( \mu
\right) \overset{\mathcal{\iota }}{\longrightarrow }E.
\end{equation*}%
Here $\mathcal{\iota }^{\ast }$ denotes the embedding of $E^{\ast }$ into $%
L^{2}\left( \mu \right) $; the notation is justified because $\mathcal{\iota 
}^{\ast }$ can be identified as the adjoint of the map $\mathcal{\iota }$
whose construction we now recall: Let $\left( K_{n}\right) $ be a compact
exhaustion of $E$. If $\varphi \in L^{2}\left( \mu \right) $, then $\mathcal{%
\iota }\left( \varphi I_{K_{n}}\right) $ can be identified with the (strong)
expectation%
\begin{equation*}
\int_{K_{n}}x\varphi \left( x\right) d\mu \left( x\right) \text{.}
\end{equation*}%
This is a\ Cauchy sequence in~$E$ and we denote the limit by $\mathcal{\iota 
}\left( \varphi \right) $. One defines $E_{2}^{\ast }$ to be the closure of $%
E^{\ast }$, or more precisely: $\mathcal{\iota }^{\ast }\left( E^{\ast
}\right) $, in $L^{2}\left( \mu \right) $. The reproducing kernel Hilbert
space\ $\mathcal{H}$ of $\mu $ is then defined as%
\begin{equation*}
\mathcal{H}:=\mathcal{\iota }\left( E_{2}^{\ast }\right) \subset \mathcal{%
\iota }\left( L^{2}\left( \mu \right) \right) \subset E.
\end{equation*}%
The map $\mathcal{\iota }$ restricted to $E_{2}^{\ast }$ is linear and
bijective onto $\mathcal{H}$ and induces a Hilbert structure%
\begin{equation*}
\left\langle h,g\right\rangle _{\mathcal{H}}:=\,\left\langle \tilde{h},%
\tilde{g}\right\rangle _{L^{2}\left( \mu \right) }\text{ \ \ \ }\forall
h,g\in \mathcal{H}
\end{equation*}%
where we set $\tilde{h}\equiv \left( \mathcal{\iota }|_{E_{2}^{\ast
}}\right) ^{-1}\left( h\right) $, of course the meaning of $\tilde{g}$ is
similar. To summarize, we have the picture%
\begin{eqnarray*}
E^{\ast }\overset{\mathcal{\iota }^{\ast }}{\longrightarrow }\mathcal{\iota }%
^{\ast }\left( E^{\ast }\right) &\subset &\overline{\mathcal{\iota }^{\ast
}\left( E^{\ast }\right) }=:E_{2}^{\ast }\subset L^{2}\left( \mu \right) 
\overset{\mathcal{\iota }}{\longrightarrow }E. \\
\text{and }\mathcal{\iota }|_{E_{2}^{\ast }} &:&E_{2}^{\ast
}\longleftrightarrow \mathcal{H}\subset E\text{.}
\end{eqnarray*}%
Under $\mu $, the map $x\mapsto $ $\tilde{h}\left( x\right) $ is a Gaussian
random variable with variance $\left\vert h\right\vert _{\mathcal{H}}^{2}$.
We can think of $x\mapsto X\left( x\right) =x$ as $E$-valued random variable
with law $\mu $. Given an ONB $\left( h_{k}\right) $ in $\mathcal{H}$ we
have the $L^{2}$\textit{-expansion} 
\begin{equation*}
X\left( x\right) =\lim_{m\rightarrow \infty }\sum_{k=1}^{m}\tilde{h}%
_{k}\left( x\right) h_{k}\text{ a.s.}
\end{equation*}%
where the sum converges in $E$ for $\mu $-a.e. $x$ and in all $L^{p}\left(
\mu \right) $-spaces, $p<\infty $. For any $A\subset $\bigskip $\mathbb{N}$
define\footnote{%
If $\left\vert A\right\vert <\infty $, this is a finite sum with values in $%
\mathcal{H}$; if $\left\vert A\right\vert =\infty $ this sum converges in $E$
for $\mu $-almost every $x$ and in every $L^{p}\left( \mu \right) $. All
this follows from $X^{A}$ being the conditional expectation of $X$ given $\{%
\tilde{h}_{k}:k\in A\}.$}%
\begin{equation*}
X^{A}\left( x\right) =\sum_{k\in A}\tilde{h}_{k}\left( x\right) h_{k}.
\end{equation*}

\begin{proposition}
\label{AbstractSupportCriterion}Consider a $L^{2}$\textit{-expansion of }$%
X\left( x\right) $ with respect to an ONB $\left( h_{k}\right) $ in $%
\mathcal{H}$ is such that 
\begin{equation*}
\tilde{h}_{k}\equiv \left( \mathcal{\iota }|_{E_{2}^{\ast }}\right)
^{-1}\left( h_{k}\right) \subset \mathcal{\iota }^{\ast }\left( E^{\ast
}\right)
\end{equation*}%
(rather than $\overline{\mathcal{\iota }^{\ast }\left( E^{\ast }\right) }$,
this is always possible; we think of $\tilde{h}_{k}$ as element in $E^{\ast
} $.) Assume $(M,\rho )$ is a Polish space equipped with its Borel $\sigma $%
-algebra $\mathcal{M}$ and $\mathcal{\varphi }:\left( E,\mathcal{B}\right)
\rightarrow \left( M,\mathcal{M}\right) $ is a measurable map such that%
\footnote{%
Converges in probablility means converges in measure with respect to $\mu $.}
is approximately continuous in the sense that for all $n\in \left\{
1,2,3,\dots \right\} $,%
\begin{equation*}
\rho \left( \mathcal{\varphi }\left( X^{\left\{ n,\dots ,m\right\} }\right) ,%
\mathcal{\varphi }\left( X^{\left\{ n,\dots \right\} }\right) \right)
\rightarrow _{m\rightarrow \infty }0\text{ in probability}
\end{equation*}%
and 
\begin{equation*}
\rho \left( \mathcal{\varphi }\left( X^{\left\{ m+1,\dots \right\} }\right) ,%
\mathcal{\varphi }\left( 0\right) \right) \rightarrow _{m\rightarrow \infty }%
\text{ in probability.}
\end{equation*}%
Assume furthermore that the restriction of $\mathcal{\varphi }|_{\mathcal{H}%
}:\mathcal{H}\rightarrow M$ is continuous. Then the topological support of $%
\mathcal{\varphi }_{\ast }\mu $ is equal to the closure of $\mathcal{\varphi 
}\left( \mathcal{H}\right) $.
\end{proposition}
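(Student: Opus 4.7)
The plan is to prove the two inclusions in the claimed identity $\text{supp}(\varphi_*\mu) = \overline{\varphi(\mathcal{H})}$.

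For the easy direction $\text{supp}(\varphi_*\mu) \subset \overline{\varphi(\mathcal{H})}$, I would observe that $X^{\{1,\ldots,m\}}(x) = \sum_{k=1}^{m} \tilde h_k(x) h_k$ takes values in $\mathcal{H}$ by construction, so $\varphi(X^{\{1,\ldots,m\}}) \in \varphi(\mathcal{H})$ almost surely. Applying the first approximate-continuity hypothesis with $n = 1$ yields $\varphi(X^{\{1,\ldots,m\}}) \to \varphi(X)$ in $\mu$-probability, and extracting an almost-surely convergent subsequence gives $\varphi(X) \in \overline{\varphi(\mathcal{H})}$ almost surely, hence the inclusion.

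The reverse inclusion $\overline{\varphi(\mathcal{H})} \subset \text{supp}(\varphi_*\mu)$ reduces, by closedness of the support, to showing $\varphi(h) \in \text{supp}(\varphi_*\mu)$ for every $h \in \mathcal{H}$. Fix such $h$ and an open neighborhood $V \ni \varphi(h)$. The fundamental tool is Cameron--Martin quasi-invariance: since $h \in \mathcal{H}$, the measure $(T_h)_*\mu$ is mutually absolutely continuous with respect to $\mu$, so $\mu(\varphi^{-1}(V)) > 0$ iff $\mathbb{P}(\varphi(X+h) \in V) > 0$. Moreover, convergence in $\mu$-probability is preserved under equivalent measures; applying this to the first hypothesis for the shifted process $X + h$ gives, with $h^{(m)} := \sum_{k=1}^{m} \langle h, h_k\rangle_{\mathcal{H}} h_k$ the Fourier truncation and $P_m x := \sum_{k=1}^{m} \tilde h_k(x) h_k$,
\begin{equation*}
\varphi(P_m X + h^{(m)}) \to \varphi(X + h) \quad \text{in $\mu$-probability as $m \to \infty$}.
\end{equation*}

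The combination step is the technical heart. Choose an open $V''$ with $\varphi(h) \in V'' \subset \overline{V''} \subset V$, and set $\eta := d(V'', V^c) > 0$. The inequality
\begin{equation*}
\mathbb{P}(\varphi(X+h) \in V) \geq \mathbb{P}(\varphi(P_m X + h^{(m)}) \in V'') - \mathbb{P}\bigl(\rho(\varphi(P_m X + h^{(m)}), \varphi(X+h)) \geq \eta\bigr)
\end{equation*}
splits the task in two: the second term tends to zero by the convergence above, and for the first term, continuity of $\varphi|_{\mathcal{H}}$ combined with $h^{(m)} \to h$ in $\mathcal{H}$ ensure that $U_m := \{p \in \mathcal{H}_m : \varphi(p + h^{(m)}) \in V''\}$ is a nonempty open set in $\mathcal{H}_m$ (containing $0$ for $m$ large), hence $\mu_{P_m X}(U_m) > 0$. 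To force the first term to beat the second for some $m$, I would invoke the independence of $P_m X$ and $Q_m X := X - P_m X$ under $\mu$ together with a Fubini--Markov argument on the integrated version of the convergence: this shows that the set of $p \in \mathcal{H}_m$ for which the conditional approximation error exceeds $\eta$ has $\mu_{P_m X}$-measure tending to zero, so its intersection with $U_m$ eventually carries positive measure.

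The principal obstacle is the quantitative comparison in the last step: one must ensure that $\mu_{P_m X}(U_m)$, which can a priori decay in $m$, strictly exceeds the $\mu_{P_m X}$-measure of the bad-conditional-error set, whose rate of decay is not controlled from the hypotheses. This is precisely the point where the translation-operator argument of Feyel--Pradelle breaks down in the general Gaussian rough-path regime $\rho \in [3/2, 2)$ and where the abstract approximate-continuity formulation takes over: crucially, $U_m$ is determined by continuity of $\varphi|_{\mathcal{H}_m}$ rather than by any explicit Hölder- or variation-modulus, so it is generically much larger than a small norm-ball and the needed comparison can be carried out.
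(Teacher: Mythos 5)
Your easy inclusion, the reduction of the converse to showing $\varphi (h)\in \mathrm{supp}(\varphi _{\ast }\mu )$ for each fixed $h\in \mathcal{H}$, and the observation that $P_{m}(x+h)=P_{m}x+h^{(m)}$ for \emph{every} $x\in E$ (which uses $\tilde{h}_{k}\in \iota ^{\ast }(E^{\ast })$, hence Cameron--Martin equivalence gives $\varphi (P_{m}X+h^{(m)})\rightarrow \varphi (X+h)$ in probability) are all sound. The proof breaks down at exactly the point you flag and then do not close: to conclude $\mathbb{P}(\varphi (X+h)\in V)>0$ you need, for \emph{some} $m$, that $\mu _{P_{m}X}(U_{m})$ strictly exceeds $\mu _{P_{m}X}(B_{m})$, where $B_{m}$ is the bad set produced by your Fubini--Markov step. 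The hypotheses give no lower bound on $\mu _{P_{m}X}(U_{m})$: continuity of $\varphi |_{\mathcal{H}}$ only makes $U_{m}$ a nonempty open subset of the $m$-dimensional space $\mathcal{H}_{m}$, and an open neighbourhood of $0$ in $\mathbb{R}^{m}$ whose size is dictated by an uncontrolled modulus of continuity can have Gaussian measure decaying arbitrarily fast in $m$, while $\mathbb{P}\left( \rho (\varphi (P_{m}X+h^{(m)}),\varphi (X+h))\geq \eta \right) \rightarrow 0$ at an unknown rate. The closing claim that $U_{m}$ is ``generically much larger than a small norm-ball'' is an assertion, not an argument, and nothing in the stated assumptions supports it; the quantitative race you set up cannot be won from them.

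The paper sidesteps this race entirely. It works with $\hat{\varphi}(x):=\lim_{m}\varphi (X^{\{1,\dots ,m\}}(x))$, which agrees with $\varphi $ on $\mathcal{H}$ and $\mu $-a.e., and exploits the identity $X^{\{1,\dots ,m\}}\left( x-\sum_{j=1}^{n}\langle \tilde{h}_{j},x\rangle h_{j}\right) =X^{\{n+1,\dots ,m\}}(x)$, valid pointwise for \emph{all} $x\in E$. Combined with the second convergence hypothesis this shows that for $\mu $-a.e.\ $x$ there exist Cameron--Martin elements $g_{n}(x)$ with $\hat{\varphi}(x-g_{n_{l}}(x))\rightarrow \varphi (0)$ a.s.\ along a subsequence; since Cameron--Martin translation preserves full-measure sets, one can fix a single $x$ for which all the $\hat{\varphi}(x-g_{n})$ lie in the (closed) support, whence $\varphi (0)$ does, and a further Cameron--Martin translation puts every $\varphi (h)$ in the support. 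The argument is purely qualitative --- only ``in probability'', ``a.s.\ along a subsequence'' and absolute continuity are used --- so no comparison of rates ever arises. To repair your version you would have to import this idea, e.g.\ by extracting a.s.\ convergent subsequences and arguing at a single fixed sample point rather than trying to beat one unquantified small probability with another.
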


\begin{proof}
Observing that $X^{\left\{ 1,\dots ,n\right\} }$ is a random but finite
linear combination of elements in $\mathcal{H}$, the inclusion%
\begin{equation*}
\mathrm{supp}\left( \mathcal{\varphi }_{\ast }\mu \right) \subset \mathcal{%
\varphi }\left( \mathcal{H}\right)
\end{equation*}%
is clear. The idea for the converse is to work with 
\begin{equation*}
\hat{\varphi}\left( x\right) :=\lim_{m\rightarrow \infty }\varphi \left(
X^{\left\{ 1,\dots ,m\right\} }\left( x\right) \right) .
\end{equation*}%
Observe that $\hat{\varphi}\equiv \varphi $ on $\mathcal{H}$. Indeed, this
from continuity of $\mathcal{\varphi }|_{\mathcal{H}}:\mathcal{H}\rightarrow
M$ together with%
\begin{equation*}
X^{\left\{ 1,\dots ,m\right\} }\left( h\right) =\sum_{k=1}^{m}\tilde{h}%
_{k}\left( h\right) h_{k}=\sum_{k=1}^{m}\left\langle h_{k},h\right\rangle _{%
\mathcal{H}}h_{k},
\end{equation*}%
which plainly converges in $\mathcal{H}$ to $h$ as $m\rightarrow \infty $.
Moreover, by the first convergence assumption $\hat{\varphi}\left( x\right)
=\varphi \left( X\left( x\right) \right) =\varphi \left( x\right) $ for $\mu 
$-a.e. $x$. We shall prove in the next lemma that there exists at least one
(fixed) element $x\in X$ such that $\mathcal{\hat{\varphi}}\left( x\right)
\in \mathrm{\ }$\textrm{supp}$\left( \mathcal{\varphi }_{\ast }\mu \right) =$
\textrm{supp}$\left( \mathcal{\hat{\varphi}}_{\ast }\mu \right) $ and such
that that there exists $\left( g_{n}\right) \subset H$, which may and will
depend on $x$, such that%
\begin{equation*}
\mathcal{\hat{\varphi}}\left( x-g_{n}\right) \rightarrow \mathcal{\hat{%
\varphi}}\left( 0\right) \text{ as }n\rightarrow \infty .\text{ }
\end{equation*}%
It then follows from the Cameron-Martin theorem, that $\mathcal{\hat{\varphi}%
}\left( x-g_{n}\right) $ and any limit point such as $\mathcal{\hat{\varphi}}%
\left( 0\right) $ is contained in the support. Again, using Cameron-Martin,
any $\mathcal{\hat{\varphi}}\left( h\right) $ with $h\in \mathcal{H}$ will
be in the support and this finishes the converse conclusion.
\end{proof}

\begin{lemma}
Under the assumptions of the previous proposition,%
\begin{equation*}
\mu \left\{ x:\exists \left( g_{n}\right) \subset \mathcal{H}:\hat{\varphi}%
\left( x-g_{n}\right) \rightarrow _{n\rightarrow \infty }\varphi \left(
0\right) \right\} =1.
\end{equation*}%
(Trivially, $\mu \left\{ x:\hat{\varphi}\left( x\right) \in \mathrm{supp}%
\left( \hat{\varphi}_{\ast }\mu \right) \right\} =1$ and the intersection of
these sets has also full measure).
\end{lemma}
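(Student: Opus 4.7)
The natural candidate is the random partial-sum approximation
\[
g_n(x) := X^{\{1,\ldots,n\}}(x) = \sum_{k=1}^{n}\tilde h_k(x)\,h_k,
\]
a finite $\mathbb{R}$-linear combination of basis vectors, hence an element of $\mathcal{H}$ for every $x$. Writing $y_n(x) := x-g_n(x)$, one has the identity (as elements of $E$)
\[
y_n(x) = X^{\{n+1,n+2,\ldots\}}(x),
\]
the right-hand side being convergent in $E$ for $\mu$-a.e.\ $x$. The plan is to show $\hat\varphi(y_n)\to\varphi(0)$ in probability under $\mu$; a deterministic subsequence $(n_j)\uparrow\infty$ along which this convergence is almost sure then furnishes, for $\mu$-a.e.\ $x$, the sequence $(g_{n_j}(x))\subset\mathcal{H}$ required by the lemma.

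The crux is to identify $\hat\varphi(y_n(x))$ explicitly. Because $\tilde h_k\in\iota^{*}(E^{*})$ by assumption, each $\tilde h_k$ may be viewed as a \emph{continuous} functional $\ell_k\in E^{*}$, and the reproducing property reads $\ell_k(h_j)=\langle h_k,h_j\rangle_{\mathcal{H}}=\delta_{kj}$. Applying $\ell_k$ termwise to the $E$-convergent series defining $y_n(x)$ yields $\ell_k(y_n(x)) = \tilde h_k(x)\,\mathbf{1}_{k>n}$, so for every $m>n$
\[
X^{\{1,\ldots,m\}}(y_n(x)) = \sum_{k=1}^{m}\ell_k(y_n(x))\,h_k = X^{\{n+1,\ldots,m\}}(x).
\]
Passing $m\to\infty$ in the definition of $\hat\varphi$ therefore gives $\hat\varphi(y_n(x)) = \lim_m \varphi\bigl(X^{\{n+1,\ldots,m\}}(x)\bigr)$, and by the first approximate-continuity assumption applied with starting index $n+1$ this limit exists in probability and equals $\varphi\bigl(X^{\{n+1,n+2,\ldots\}}(x)\bigr)$.

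Now invoke the second approximate-continuity assumption: $\rho\bigl(\varphi(X^{\{n+1,\ldots\}}),\varphi(0)\bigr)\to 0$ in probability as $n\to\infty$. Chaining this with the previous identification yields $\rho\bigl(\hat\varphi(y_n),\varphi(0)\bigr)\to 0$ in probability under $\mu$, and extracting a deterministic subsequence $(n_j)$ produces $\mu$-a.s.\ convergence $\hat\varphi(x-g_{n_j}(x))\to\varphi(0)$, which is what the lemma asserts.

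The main technical obstacle is the bookkeeping of the two nested in-probability limits, one in $m$ (defining $\hat\varphi$ and capturing the first assumption) and one in $n$ (the second assumption), into a single $\mu$-a.s.\ statement. A diagonal extraction handles this routinely: first fix a subsequence $(m_k)$ along which convergence in the first assumption holds $\mu$-a.s.\ simultaneously for every shift index $n+1$, so that $\hat\varphi(y_n(x))$ is $\mu$-a.s.\ well defined and equal to $\varphi(X^{\{n+1,\ldots\}}(x))$; then extract $(n_j)$ from the second assumption. A minor but essential observation is the interchange $\ell_k\bigl(\sum_{j>n}\tilde h_j(x)h_j\bigr)=\sum_{j>n}\tilde h_j(x)\,\ell_k(h_j)$ used to reduce $X^{\{1,\ldots,m\}}(y_n)$ to $X^{\{n+1,\ldots,m\}}$, which is justified by continuity of $\ell_k\in E^{*}$ together with $E$-convergence of the series for $y_n(x)$.
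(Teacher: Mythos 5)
Your proposal is correct and follows essentially the same route as the paper: represent each $\tilde h_k$ as a continuous functional with $\ell_k(h_j)=\delta_{kj}$, deduce the identity $X^{\{1,\dots,m\}}(x-g_n(x))=X^{\{n+1,\dots,m\}}(x)$, identify $\hat\varphi(x-g_n(x))$ with $\varphi(X^{\{n+1,\dots\}}(x))$ via the first approximate-continuity hypothesis, and then apply the second hypothesis with a subsequence extraction and a countable union of null sets. The only cosmetic difference is that you route the computation through the $\mu$-a.e.\ series identity $x-g_n(x)=X^{\{n+1,\dots\}}(x)$, whereas the paper records the finite-sum identity as holding for every $x\in E$; both are fine.
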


\begin{proof}
By assumption, $h_{i}=\mathcal{\iota }\left( \tilde{h}_{i}\right) $ is an
ONB in $\mathcal{H}$ and all $\tilde{h}_{i}$ are of form $\left\langle
\lambda _{i},\cdot \right\rangle _{E^{\ast },E}$. As in \cite[(3.4.14)]%
{DeuSt89},%
\begin{eqnarray*}
\left\langle \lambda _{i},h_{j}\right\rangle _{E^{\ast },E} &=&\left\langle
\left\langle \lambda _{i},\cdot \right\rangle _{E^{\ast },E},\tilde{h}%
_{j}\right\rangle _{L^{2}\left( \mu \right) } \\
&=&\left\langle \tilde{h}_{i},\tilde{h}_{j}\right\rangle _{L^{2}\left( \mu
\right) }=\left\langle h_{i},h_{j}\right\rangle _{\mathcal{H}}.
\end{eqnarray*}%
Thinking of $\tilde{h}_{i}$ as elements on $E^{\ast }$, we abuse of notation
and write $\left\langle \tilde{h}_{i},h_{j}\right\rangle _{E^{\ast },E}$
instead of $\left\langle \lambda _{i},h_{j}\right\rangle _{E,E^{\ast }}$.
What we have just seen is that%
\begin{equation*}
\left\langle \tilde{h}_{i},h_{j}\right\rangle _{E^{\ast },E}=\delta _{i,j}
\end{equation*}%
where $\delta _{i,j}=1$ if $i=j$ and zero otherwise. Let us fix $n$. By
definition, 
\begin{equation*}
\forall x\in E:X^{\left\{ 1,\dots ,m\right\} }\left( x\right)
:=\sum_{i=1}^{m}\left\langle \tilde{h}_{i},x\right\rangle _{E^{\ast },E}h_{i}
\end{equation*}%
and using linearity of the pairing between $E^{\ast }$ and $E$ we find that,
for $m>n$,%
\begin{equation}
\forall x\in E:X^{\left\{ 1,\dots ,m\right\} }\left(
x-\sum_{j=1}^{n}\left\langle \tilde{h}_{j},x\right\rangle _{E^{\ast
},E}h_{j}\right) =\sum_{i=n+1}^{m}\left\langle \tilde{h}_{i},x\right\rangle
_{E^{\ast },E}h_{i}  \label{EqualityForall_x}
\end{equation}%
and so, \textit{again for all} $x\in E$,%
\begin{equation}
\mathcal{\varphi }\left( X^{\left\{ 1,\dots ,m\right\} }\left(
x-\sum_{j=1}^{n}\left\langle \tilde{h}_{j},x\right\rangle _{E^{\ast
},E}h_{j}\right) \right) =\mathcal{\varphi }\left(
\sum_{i=n+1}^{m}\left\langle \tilde{h}_{i},x\right\rangle _{E^{\ast
},E}h_{i}\right) .  \label{FeyelDelAPradProof}
\end{equation}%
By assumption, for every fixed $n\in \mathbb{N}$, the right hand converges
in probability to $\mathcal{\varphi }\left( X^{\left\{ n+1,\dots \right\}
}\left( x\right) \right) $. But then the left hand side must also converges
(in probability) and by a.s. uniqueness of limits in probability,%
\begin{eqnarray*}
\lim_{m\rightarrow \infty }\mathcal{\varphi }\left( X^{\left\{ 1,\dots
,m\right\} }\left( x-\sum_{j=1}^{n}\left\langle \tilde{h}_{j},x\right\rangle
_{E^{\ast },E}h_{j}\right) \right) &=&\lim_{m\rightarrow \infty }\mathcal{%
\varphi }\left( \sum_{i=n+1}^{m}\left\langle \tilde{h}_{i},x\right\rangle
_{E^{\ast },E}h_{i}\right) \\
= &&\mathcal{\varphi }\left( X^{\left\{ n+1,\dots \right\} }\left( x\right)
\right)
\end{eqnarray*}%
for $\mu $-a.e. $x\in E$. The countable union of nullsets still being a
nullset it follows that (i) the set%
\begin{equation*}
\left\{ x:\forall n\in \mathbb{N}\text{:}\lim_{m\rightarrow \infty }\mathcal{%
\varphi }\left( X^{\left\{ 1,\dots ,m\right\} }\left(
x-\sum_{j=1}^{n}\left\langle \tilde{h}_{j},x\right\rangle _{E^{\ast
},E}h_{j}\right) =\mathcal{\varphi }\left( X^{\left\{ n+1,\dots \right\}
}\left( x\right) \right) \right) \right\}
\end{equation*}%
has full $\mu $-measure. By definition of $\mathcal{\hat{\varphi}}$ and our
assumption, $\mathcal{\varphi }\left( X^{\left\{ n+1,\dots \right\} }\left(
x\right) \right) \rightarrow \mathcal{\varphi }\left( 0\right) =\mathcal{%
\hat{\varphi}}\left( 0\right) $ in probability,and hence for a.e. $x$ along
a subsequence $\left( n_{l}\right) $, we see that%
\begin{equation*}
\mu \left\{ x:\mathcal{\hat{\varphi}}\left( x-\sum_{j=1}^{n_{l}}\left\langle 
\tilde{h}_{j},x\right\rangle _{E^{\ast },E}h_{j}\right) \rightarrow
_{l\rightarrow \infty }\mathcal{\hat{\varphi}}\left( 0\right) \right\} =1
\end{equation*}
The proof is finished by defining $\left( g^{l}\right) \subset \mathcal{H}$
by $g^{l}=\sum_{j=1}^{n_{l}}\left\langle \tilde{h}_{j},x\right\rangle
_{E^{\ast },E}h_{j}$.
\end{proof}

\bibliographystyle{plain}
\bibliography{roughpaths}

\bigskip

\end{document}